\newcommand{\CC}{\mathcal{C}}
\newcommand{\Rep}{{\rm Rep}}
\newcommand{\U}{{\rm U}}
\newcommand{\GL}{{\rm GL}}
\newcommand{\ot}{\otimes}
\newcommand{\B}{\mathcal{B}}
\newcommand{\e}{\mathbf{e}}
\newcommand{\bt}{\beta}
\newcommand{\Z}{\mathbb{Z}}
\newcommand{\Sn}{\mathfrak{S}}
\newcommand{\N}{\mathbb{N}}
\newcommand{\C}{\mathbb{C}}
\newcommand{\R}{\mathbb{R}}
\newcommand{\al}{\alpha}
\newcommand{\AGL}{{\rm AGL}}
\newcommand{\Aut}{{\rm Aut}}
\newtheorem{theorem}{Theorem}[section]
\newtheorem*{theorem*}{Theorem}
\newtheorem{lemma}[theorem]{Lemma}
\newtheorem{prop}[theorem]{Proposition}
\theoremstyle{definition}
\newtheorem{conj}[theorem]{Conjecture}
\theoremstyle{remark}
\newtheorem{remark}[theorem]{Remark}
\numberwithin{equation}{section}
\begin{document}

\title{Local representations of the loop braid group}
\author{Zolt\'an K\'ad\'ar$^{1}$, Paul Martin$^1$, Eric Rowell$^2$, and Zhenghan Wang$^{3}$}

\address{$^1$Department of Pure Mathematics\\
University of Leeds\\Leeds, LS2 9JT\\UK}
\email{zokadar@gmail.com, P.P.Martin@leeds.ac.uk}
\address{$^2$Department of Mathematics\\
    Texas A\&M University\\
    College Station, TX 77843-3368\\
    U.S.A.}
\email{rowell@math.tamu.edu}
\address{$^3$Microsoft Research, Station Q and Department of Mathematics\\ University of California\\ Santa Barbara, CA 93106\\U.S.A}
\email{zhenghwa@microsoft.com, zhenghwa@math.ucsb.edu}

\thanks{The first author is supported by EPSRC grant EP/I038683/1. The
  second author is partially supported by EPSRC grant EP/I038683/1.
The third author is partially supported by NSF grant DMS-1108725.  The fourth author, partially supported by NSF DMS 1108736, would like to thank the School of Mathematics and Department of Physics at University of Leeds for their hospitality during his visit, where this project began.}

\keywords{Loop braid group, braided vector space, TQFT}

\begin{abstract}
 We study representations of the loop braid group $LB_n$ from the perspective of extending representations
 of the braid group $\B_n$.  We also pursue a generalization of the braid/Hecke/Temperlely-Lieb paradigm---uniform finite dimensional quotient algebras of the loop braid group algebras.
\end{abstract}

\maketitle

\section{Introduction}

Non-abelian statistics of anyons in two spatial dimensions has attracted considerable attention largely due to topological quantum computation \cite{nayaktqc,wang10}.
Recently, non-abelian statistics is extended to statistics of point-like topological defects projectively \cite{bbcw2014}.  But an extension of non-abelian statistics of point-like excitations to three spatial dimensions is not possible.  However, loop or closed string excitations occur naturally in condensed matter physics and string theory.  Therefore, it is important to study statistics of extended objects in three spatial dimensions.

A systematical way to produce interesting and powerful representations of the braid group is via $(2+1)$-topological quantum field theories (TQFTs) \cite{wang10}.
Since the loop braid group is a motion group of sub-manifolds, we expect that interesting representations of the loop braid group could result from extended $(3+1)$-TQFTs.  But $(3+1)$-TQFTs are much harder to construct, and the largest known explicit class is the Crane-Yetter TQFTs based on pre-modular categories \cite{CY,ww12}.  The difficulty of constructing interesting representations of the loop braid group reflects the difficulty of constructing non-trivial $(3+1)$-TQFTs.  Potentially, given a pre-modular category $\mathcal{C}$, there are representations of all motion groups of sub-manifolds including the loop braid group associated to $\mathcal{C}$, but no explicit computation has been carried out for any non-trivial theory.  Hence, we will take a closely related, but different first step in the study of representations of the loop braid group.

The tower of group algebras of Artin's braid group $\B_n$, for $n\geq
1$ have topologically interesting quotients, such as the
Temperley-Lieb algebras \cite{jones86}, Hecke algebras \cite{jones87}
and BMW-algebras \cite{BW,murakami}.  Each of these algebras support a
Markov trace which then produces polynomial knot and link invariants.
Moreover, at roots of unity
many such
quotient algebras can be realized as endomorphism algebras in unitary
modular categories--the algebraic structure underlying certain
$(2+1)$-TQFTs \cite{Turaev94}.
These, in turn, describe the quantum symmetries of topological phases of matter in $2$ spatial dimensions \cite{wang13}.  The braid group representations associated with unitary modular categories would be physically realized as the motion of point-like particles in the disk $D^2$.
Our goal is to generalize this picture to topological systems in $3$ spatial dimensions with loop-like excitations.

The loop braid group $LB_n$ is the motion group of the $n$-component
oriented unlink inside the $3$-dimensional ball $D^3$
\cite{dahm,Gold,lin}.  It has appeared in other contexts as well:
it is
isomorphic to the braid-permutation group (see \cite{BWC}), the
welded-braid group (see \cite{FRR}) and the group of conjugating
automorphisms of
a certain
free group (see \cite{mcc}).  For an exploration of the structure as a semidirect product, see \cite{bard1}.  Very little is known about the linear representations of $LB_n$.  We investigate when a given representation of $\B_n$ may be extended to $LB_n$.  Some results in this direction are found in \cite{bard2} and \cite{Ver}.  For example, it is known that the faithful Lawrence-Krammer-Bigelow (LKB) representation of $\B_n$ does not extend to $LB_n$ for $n\geq 4$ except at degenerate values of the parameters (\cite{bard2}), but the Burau representation of $\B_n$ does extend.

It seems to be a rather hard problem to discover interesting finite-dimensional quotients of the tower of loop braid group algebras of $LB_n$.  Considering that the LKB representation appears in the BMW-algebra, we should not expect to simply extend known $\B_n$ quotients. Our approach is to consider extensions of $\B_n$ representations associated with solutions to the parameter-free Yang-Baxter equation.  This ensures that the quotient algebras are finite dimensional.  The main problem we study is when such representations extend.  One particular family of extendible representations are studied in some detail: the so-called affine group-type solutions.

The contents of the paper are as follows.  In Sec.~$2$, we recall a
presentation of the loop braid group.  In Sec.~$3$, we study
representations of the loop braid group from braided vector spaces,
and hence make the connection to Drinfeld doubles.
In Sec.~$4$, we initiate a general program to generalize the
braid/Hecke/Temperlely-Lieb paradigm---uniform
finite dimensional quotient algebras of the loop braid quotient
algebras, and report some preliminary analysis.
In particular we answer a question that has been open for some time,
raised in \cite[\S12.1]{Martin91}, about the structure of certain
`cubic' braid group
representations that lift to loop braid representations.

\section{The loop braid group and its relatives}

A fundamental theorem for loop braid groups is as follows.
\begin{theorem*}{\rm \cite{FRR}}
The \textbf{loop braid group} $LB_n$ is
isomorphic to
the abstract group generated by $2(n-1)$ generators
$\sigma_i$ and $s_i$ for $1\leq i\leq (n-1)$, satisfying the following three sets of relations:

The \textbf{braid relations}:
\begin{enumerate}
 \item[(B1)] $\sigma_i\sigma_{i+1}\sigma_i=\sigma_{i+1}\sigma_i\sigma_{i+1}$
 \item[(B2)] $\sigma_i\sigma_j=\sigma_j\sigma_i$ for $|i-j|>1$,
\end{enumerate}
the \textbf{symmetric group relations}:
\begin{enumerate}
 \item[(S1)] $s_is_{i+1}s_i=s_{i+1}s_is_{i+1}$
 \item[(S2)] $s_is_j=s_js_i$ for $|i-j|>1$,
 \item[(S3)] $s_i^2=1$
\end{enumerate}

and the \textbf{mixed relations}:
\begin{enumerate}
\item[(L0)] $\sigma_is_j=s_j\sigma_i$ for $|i-j|>1$
\item[(L1)] $s_is_{i+1}\sigma_i=\sigma_{i+1}s_is_{i+1}$
\item[(L2)] $\sigma_i\sigma_{i+1}s_i=s_{i+1}\sigma_i\sigma_{i+1}$
\end{enumerate}
\qed
\end{theorem*}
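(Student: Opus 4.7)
The plan is to construct an isomorphism between the motion group $LB_n$ and the abstract group $G_n$ with the given presentation. First I would fix a geometric interpretation of the generators: $\sigma_i$ is the motion that passes the $i$-th loop through the $(i+1)$-th loop (a ``welded'' classical crossing), and $s_i$ is the motion that swaps two adjacent loops by sliding them past each other without interpenetration (a virtual crossing). Together these generate $LB_n$ by a general-position argument on paths in the configuration space of $n$ disjoint unknotted, unlinked oriented circles in $D^3$.

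Next I would verify that each listed relation is satisfied in $LB_n$. The braid relations (B1), (B2) are standard Reidemeister-III type moves applied only to classical crossings. The relations (S1)--(S3) express that $\langle s_i\rangle$ realises a copy of $\Sn_n$, since disjoint loops just permute. The mixed relations (L0)--(L2) are verified by explicit isotopy of welded braid diagrams; in particular, (L1) and (L2) are precisely the two permitted mixed Reidemeister moves, and the absence of their mirror images (the ``forbidden'' moves of welded knot theory) is reflected in the asymmetry of (L2).

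The substantive step is to show that the resulting surjection $G_n \twoheadrightarrow LB_n$ has trivial kernel. I would follow the route of \cite{FRR}: each loop encloses a free generator $x_i$ of $\pi_1\bigl(D^3 \setminus \bigsqcup_{i=1}^n \text{loop}_i\bigr) \cong F_n$, so a motion induces an automorphism of $F_n$. Explicitly, $\sigma_i$ acts by $x_i \mapsto x_i x_{i+1} x_i^{-1}$, $x_{i+1}\mapsto x_i$ (fixing the remaining generators), and $s_i$ acts as the transposition $x_i \leftrightarrow x_{i+1}$. By a theorem of Dahm and Goldsmith this action is faithful, and by McCool the image is precisely the \emph{braid-permutation} subgroup of $\Aut(F_n)$---the one generated by these basis-conjugating and permuting automorphisms.

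For the final comparison I would argue by a normal-form reduction: using (L1), (L2) and the commutation relations, every word in $G_n$ can be rewritten as $w\cdot\tau$ with $w\in\langle \sigma_1,\dots,\sigma_{n-1}\rangle$ and $\tau\in\langle s_1,\dots,s_{n-1}\rangle \cong\Sn_n$. The main obstacle is then showing that the kernel is trivial: if such a product induces the identity automorphism of $F_n$, then reading off the induced permutation on the generators forces $\tau=1$, after which the classical faithfulness of Artin's representation $\B_n\hookrightarrow\Aut(F_n)$ forces $w=1$. The delicate point in the rewriting is keeping track of which side of a classical crossing a virtual crossing is allowed to pass---the bookkeeping that ensures one has not implicitly used a forbidden move is the combinatorial heart of the argument carried out in \cite{FRR}.
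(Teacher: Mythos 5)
The paper itself does not prove this theorem; it states it with the citation to \cite{FRR} and a \qed, so there is no internal proof to compare against. Your sketch of the first three stages is consistent with the route taken in the literature: identify the generators geometrically, verify the relations by isotopy, and then pass to the faithful representation in $\Aut(F_n)$ via meridians of the loops, invoking Dahm--Goldsmith for faithfulness and McCool/FRR for the characterization of the image as the permutation-conjugacy (braid-permutation) automorphisms. That much is sound in outline.

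The final ``normal-form reduction'' step, however, contains a genuine error. You claim that using (L1), (L2) and the commutation relations every word in $G_n$ can be rewritten as $w\cdot\tau$ with $w\in\langle\sigma_1,\dots,\sigma_{n-1}\rangle\cong\B_n$ and $\tau\in\langle s_1,\dots,s_{n-1}\rangle\cong\Sn_n$. This is false, and the failure is already visible at $n=2$: there (L0), (L1), (L2) are vacuous, so the abstract group is $G_2=\langle\sigma_1,s_1\mid s_1^2=1\rangle\cong\Z*\Z_2$, a free product in which words such as $s_1\sigma_1 s_1\sigma_1 s_1$ admit no shortening, whereas the proposed normal form would allow only $\{\sigma_1^k s_1^\epsilon\}$. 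More conceptually, the pure loop braid group (the kernel of $LB_n\twoheadrightarrow\Sn_n$) is McCool's group of basis-conjugating automorphisms $P\Sigma_n$, which is strictly larger than the pure braid group $P_n$ (e.g.\ $P\Sigma_2\cong F_2$ while $P_2\cong\Z$), so no factorization of $LB_n$ as a set product $\B_n\cdot\Sn_n$ is possible. What \emph{is} true is the split exact sequence $1\to P\Sigma_n\to LB_n\to\Sn_n\to 1$, and the actual argument in \cite{FRR} establishes the presentation by a more delicate combinatorial/geometric normal form for welded braid diagrams (equivalently, for permutation-conjugacy automorphisms), not by pushing all $s_i$'s to one side of all $\sigma_i$'s. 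To repair your argument you would need to replace the braid-times-permutation normal form with FRR's normal form for words in $BP_n$, or argue directly that the presented group injects into $\Aut(F_n)$.
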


It is clear from this presentation that
the
subgroup generated by the $\sigma_i$ is Artin's braid group $\B_n$ while the $s_i$ generate the symmetric group $\mathfrak{S}_n$.
The loop braid group is a quotient of the \textbf{virtual braid group} $VB_n$ \cite{Ver} which satisfies all relations above except $(L2)$.

The relations $(L1)$ also hold if read backwards, \emph{i.e.} $s_{i+1}s_i\sigma_{i+1}=\sigma_i s_{i+1}s_i$, but $(L2)$ is not equivalent to its reverse:

\begin{enumerate}
\item[(L3)] $s_i\sigma_{i+1}\sigma_i=\sigma_{i+1}\sigma_{i}s_{i+1}$.
\end{enumerate}

However, in the transposed group $OLB_n$ (\emph{i.e.} the group that coincides with $LB_n$ as a set, but with the opposite multiplication $a* b=ba$) one has all relations as in $LB_n$ except $(L2)$ is replaced by $(L3)$.  Every group is isomorphic to its transposed group (via inversion) so we may freely work with either $LB_n$ or $OLB_n$.

We define the \textbf{symmetric loop braid group} $SLB_n$ to be $LB_n$ modulo the relations $(L3)$.  In particular we have surjections
$VB_n\twoheadrightarrow LB_n\twoheadrightarrow SLB_n$.

\section{$LB_n$ representations from braided vector spaces}
Several authors (see e.g. \cite{Ver}) have considered the question of extending representations of $\B_n$ to $LB_n$.  In this section we consider extending certain local representations of $\B_n$ (see \cite{RW}).

A \textbf{braided vector space} (BVS) $(V,c)$ is a solution $c\in GL(V^{\ot 2})$ to the Yang-Baxter equation:
$$(c\ot Id_V)(Id_V\ot c)(c\ot Id_V)=(Id_V\ot c)(c\ot Id_V)(Id_V\ot c).$$

Any BVS gives rise to a \textbf{local} representation $\rho^c$ of $\B_n$ via $\sigma_i\rightarrow Id_V^{\ot i-1}\ot c\ot Id_V^{\ot n-i-1}$.  An extension of $\rho^c$ to $LB_n$ or $OLB_n$ via $s_i\rightarrow Id_V^{\ot i-1}\ot S\ot Id_V^{\ot n-i-1}$ where $S\in\GL(V^{\ot 2})$ will also be called \textbf{local}.  The corresponding triple $(V,c,S)$ will be called a \textbf{loop braided vector space}.

A special case of local $\B_n$ representations through \emph{group-type} BVSs were introduced by Andruskiewitsch and Schneider \cite{AS}.  These play an important role in
their classification program for pointed finite-dimensional Hopf algebras.  We extend their definition slightly and say that a BVS $(V,c)$ is of \textbf{left group-type} (resp. \textbf{right group-type}) if there is an ordered basis $X:=[x_1,\ldots,x_n]$ of $V$ and $g_i\in GL(V)$ such that
$ c(x_i\ot z)=g_i(z)\ot x_i$ (resp. $c(z\ot x_j)=x_j\ot g_j(z)$) for all $i,j$ and $z\in V$.  There is a one-to-one correspondence between left and right group-type BVSs, since
the Yang-Baxter equation is invariant under $c\leftrightarrow c^{-1}$. Indeed, the inverse of $c(x_i\ot x_j)=g_i(x_j)\ot x_i$ is $c^{-1}(x_i\ot x_j)=x_j\ot g_j^{-1}(x_i)$, so that $(V,c)$ is a BVS of left group-type if and only if $c^{-1}$ is a BVS of right group-type.

\begin{lemma}\label{l:ybinbasis}
 Suppose that $(V,c)$ is a BVS of left group-type with respect to $X:=[x_1,\ldots,x_n]$ and corresponding $g_i$ defined on $X$ by $g_i(x_j):=\sum_{k=1}^n g_i^{j,k}x_k$.  If $g_i^{j,k}\neq 0$ then
$g_ig_j=g_kg_i$.
\end{lemma}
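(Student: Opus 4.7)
The plan is to evaluate both sides of the Yang--Baxter equation on a test vector of the form $x_i \ot x_j \ot z$, where $z \in V$ is arbitrary, and then read off the claimed relation by comparing coefficients in a natural basis of $V^{\ot 3}$.

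First I would compute the left-hand side $(c\ot \one)(\one \ot c)(c\ot \one)(x_i\ot x_j\ot z)$ by applying the three factors in turn. The innermost $c\ot \one$ produces $\sum_k g_i^{j,k}\, x_k \ot x_i \ot z$, since $c(x_i\ot x_j)=g_i(x_j)\ot x_i$ and $g_i(x_j)=\sum_k g_i^{j,k} x_k$. The middle $\one\ot c$ then acts on the last two slots via $c(x_i\ot z)=g_i(z)\ot x_i$, giving $\sum_k g_i^{j,k}\, x_k \ot g_i(z) \ot x_i$. Finally, the outer $c\ot \one$ uses $c(x_k\ot g_i(z))=g_k(g_i(z))\ot x_k$ to produce
\[
\sum_k g_i^{j,k}\, (g_k g_i)(z) \ot x_k \ot x_i .
\]

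Second, I would compute the right-hand side $(\one\ot c)(c\ot \one)(\one\ot c)(x_i\ot x_j\ot z)$ in the same way. After the three applications of $c$ (each one stripping an $x$ to the right and applying a $g$ to the left), one obtains
\[
\sum_k g_i^{j,k}\, (g_i g_j)(z) \ot x_k \ot x_i .
\]

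Setting the two expressions equal and using that the vectors $\{x_k\ot x_i\}_{k=1}^n$ are linearly independent in $V^{\ot 2}$, the coefficient of $x_k\ot x_i$ on each side must agree as elements of $V$, for every $z\in V$. This yields $g_i^{j,k}\,(g_k g_i)(z) = g_i^{j,k}\,(g_i g_j)(z)$ for all $z$, and hence $g_k g_i = g_i g_j$ whenever $g_i^{j,k}\neq 0$, which is the desired conclusion. The argument is essentially bookkeeping: the only place one could slip is keeping track of which $g$ is applied to $z$ versus to $x_j$, so the main care needed is being consistent about the order of composition when computing $c(x_k\ot g_i(z))$ on the left-hand side.
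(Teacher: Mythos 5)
Your proof is correct and follows essentially the same route as the paper: evaluate both sides of the Yang--Baxter equation on $x_i\ot x_j\ot z$, collect coefficients of the linearly independent vectors $x_k\ot x_i$, and conclude. The computations of both sides match the paper's (up to which side of the identity is written first), and your closing remark about tracking the order of composition is exactly where the care is needed.
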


\begin{proof}
 we compute $$(c\ot I)(I\ot c)(c\ot I)(x_i\ot x_j\ot z)$$
and $$(I\ot c)(c\ot I)(I\ot
c)(x_i\ot x_j\ot z)$$ and compare the two sides.  This yields the equality:

$$\sum_{k=1}^ng_i^{j,k}g_ig_j(z)\ot x_k\ot x_i=\sum_{k=1}^ng_i^{j,k}g_kg_i(z)\ot
x_k\ot x_i.$$  Thus we see that if $g_i^{j,k}\neq 0$ then
$g_ig_j(z)=g_kg_i(z)$ for all $z$, and the result follows.
\end{proof}
The proof of Lemma \ref{l:ybinbasis} shows that the Yang-Baxter equation for $(V,c)$ of
left group type is equivalent to the matrix equation:

\begin{equation}\label{gteq}
 g_i^{j,k}g_ig_j=g_i^{j,k}g_kg_i \quad \mathrm{for\; all}\; i,j,k.
\end{equation}
A similar result may be derived for right group type BVSs.

If the $g_i$ act diagonally with respect to the basis $X$ so that $c(x_i\ot x_j)=q_{ij}(x_j\ot x_i)$ for some scalars $q_{ij}$ then we say
$(V,c)$ is of \textbf{diagonal type}.  More generally we will say that $(V,c)$ is \textbf{diagonalizable} if there exists a basis of $V$ with respect to which $(V,c)$ is a BVS of diagonal type.  We do not need to specify a handedness for diagonal type BVS, indeed we have:

\begin{lemma}\label{l:leftright}  A BVS $(V,c)$ is of both left and right group type if and only if  $(V,c)$ is diagonalizable.
\end{lemma}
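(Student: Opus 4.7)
The forward direction ($\Leftarrow$) is a direct check: if $c(x_i\ot x_j)=q_{ij}x_j\ot x_i$ in some basis $X$, then defining $g_i(x_j):=q_{ij}x_j$ and $h_j(x_i):=q_{ij}x_i$ (and extending linearly) exhibits $(V,c)$ as being of both left and right group-type with respect to $X$.

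For the reverse direction, suppose $(V,c)$ is of left group-type with basis $X=[x_1,\ldots,x_n]$ and operators $g_i$, and of right group-type with basis $Y=[y_1,\ldots,y_n]$ and operators $h_j$. The first step is to evaluate $c(x_i\ot y_j)$ both ways, yielding $g_i(y_j)\ot x_i=y_j\ot h_j(x_i)$. Since $x_i$ and $y_j$ are nonzero, the equality of these rank-one tensors forces $g_i(y_j)=\mu_{ij}\,y_j$ and $h_j(x_i)=\mu_{ij}\,x_i$ for scalars $\mu_{ij}\in\F^\times$. In particular $Y$ is a simultaneous eigenbasis of $\{g_i\}$, so the $g_i$ commute pairwise. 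Applying Lemma~\ref{l:ybinbasis}, whenever $g_i^{j,k}\neq 0$ we have $g_i g_j=g_k g_i$, and combined with commutativity this forces $g_j=g_k$.

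Now define an equivalence relation on $\{1,\ldots,n\}$ by $j\sim k\iff g_j=g_k$ and set $V_{[j]}:=\mathrm{span}\{x_k:k\sim j\}$, giving $V=\bigoplus_{[j]}V_{[j]}$. The previous conclusion shows each $g_i$ preserves every $V_{[j]}$ (only $x_k$ with $k\sim j$ appear in $g_i(x_j)$), and for any $v\in V_{[j]}$ the left group-type formula collapses to $c(v\ot w)=g_j(w)\ot v$, because $g_i=g_j$ for all $i\in[j]$. The basis $Y$ need not respect this block decomposition, but the commuting diagonalizable family $\{g_i\}$ restricts to a commuting diagonalizable family on each invariant summand $V_{[j]}$, so one can build a new simultaneous eigenbasis $Y'=\bigsqcup_{[j]}Y'_{[j]}$ with $Y'_{[j]}\subset V_{[j]}$ block by block. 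For any $y'\in Y'\cap V_{[j]}$ and $y''\in Y'$, the collapsed formula gives $c(y'\ot y'')=g_j(y'')\ot y'=\la\,y''\ot y'$, where $\la$ is the eigenvalue of $g_j$ on $y''$; hence $c$ is diagonal in the basis $Y'$.

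The main technical obstacle, as I see it, is recognizing that the given right-group-type basis $Y$ need not respect the block decomposition of $V$ coming from the equivalence relation on $\{g_i\}$, which is what forces the re-choice of a simultaneous eigenbasis compatible with the blocks before the diagonal form of $c$ can be read off from the left group-type formula.
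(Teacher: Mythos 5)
Your proof is correct and follows essentially the same route as the paper's: evaluate $c$ on $x_i\ot y_j$ to see the $g_i$ are simultaneously diagonalized by $Y$ (hence commute), combine with Lemma~\ref{l:ybinbasis} to get $g_i^{j,k}\neq 0\Rightarrow g_j=g_k$, decompose $V$ into the $g$-invariant blocks spanned by the $x_k$ with a common $g_k$, and re-choose a simultaneous eigenbasis block by block. Your explicit articulation of the rank-one tensor argument and of why the eigenbasis must be rebuilt to respect the block decomposition are welcome elaborations of details the paper leaves terse, but they do not change the substance.
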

\begin{proof}  If $c$ is of left group type with respect to $X$ and $g_i\in GL(V)$ and right group type with respect to $Y:=[y_1,\ldots,y_n]$ and $h_j\in GL(V)$ then $x_i\ot y_j$ is a basis for $V$, and $c(x_i\ot y_j)=g_i(y_j)\ot x_i=y_j\ot h_j(x_i)$.  This implies that the $g_i$ are simultaneously diagonalized in the basis $Y$ so that the $g_i$ pairwise commute.  Denote by $G$ the (abelian) group generated by the $g_i$ and let $g_i^{(j,k)}$ be the coefficient of $x_k$ in $g_i(x_j)$.  Since the $g_i$ pairwise commute, Lemma \ref{l:ybinbasis} shows that $g_i^{(j,k)}\not=0$ implies $g_j=g_k$.  Now note that the spaces $W_k:=\C\{x_j:g_j=g_k\}$ are $G$-invariant, and denote by $I_k:=\{j:x_j\in W_k\}$, so that the distinct $I_k$ partition $[n]$.  So choose a basis for each $W_k$ with respect to which each $g_i$ is diagonal, and denote the union of these bases by $Z$. It is clear that $g_i$ are diagonal with respect to the basis $Z$, but we
must check that $(V,c)$ is of group type with respect to this basis.  Let $z_k=\sum_{j\in I_k}  z_j^k x_j\in W_k\cap Z$.  Then
$$c(z_k\ot z_s)=\sum_{j\in I_k} z_j^k c(x_j\ot z_s)= \sum_{j\in I_k} g_j(z_s)\ot z_j^kx_j =q_{k,s}z_s\ot z_k$$
since all the $g_j$ with $j\in I_k$ are identical and so act by a common scalar $q_{k,s}$ on $z_s$.

The other direction is clear: diagonal type BVSs are of both left and right group type.
\end{proof}

BVSs of group type \emph{always} extend to loop BVSs, with left group-type BVSs giving representations of $OLB_n$ and right group-type BVSs giving representations of $LB_n$:

\begin{prop}\label{p:extension} Define $S(x_i\ot x_j):=x_j\ot x_i$. If $(V,c)$ is a BVS of left (resp. right) group-type then $(V,c,S)$ is a loop braided vector space.
\end{prop}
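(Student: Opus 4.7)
The plan is to verify each defining relation of $LB_n$ (respectively $OLB_n$) on basis vectors of $V^{\ot n}$. Since both $c$ and the swap $S$ act locally on two adjacent tensor factors, most relations are automatic and only one or two direct computations are genuinely needed.

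Relation (B1) is precisely the Yang--Baxter equation satisfied by $c$; (B2) and (L0) hold because the operators involved act on disjoint tensor factors; and (S1)--(S3) hold because $S$ is the usual tensor flip and hence gives the standard permutation action of $\Sn_n$ on $V^{\ot n}$. The real content thus lies in (L1) together with (L3) in the left group-type (i.e.\ $OLB_n$) case, and in (L1) together with (L2) in the right group-type case.

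For the left group-type case, I would apply both sides of each relation to a basis vector $x_a \ot x_b \ot x_c$ in positions $(i,i+1,i+2)$. Writing $g_a(x_b)=\sum_k g_a^{b,k}x_k$, a direct expansion shows
$$s_i s_{i+1} \sigma_i (x_a \ot x_b \ot x_c) \;=\; \sum_k g_a^{b,k}\, x_c \ot x_k \ot x_a \;=\; \sigma_{i+1} s_i s_{i+1} (x_a \ot x_b \ot x_c),$$
which is (L1). Expanding additionally $g_a(x_c)=\sum_k g_a^{c,k}x_k$, the analogous expansion shows both sides of (L3) are equal to $\sum_{j,k} g_a^{b,j} g_a^{c,k}\, x_k \ot x_j \ot x_a$. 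Notably these identities use no relation on the $g_i$'s beyond their definition; the swap $S$ simply reshuffles tensor factors in matching ways on each side, so Lemma \ref{l:ybinbasis} is not invoked.

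For the right group-type case, where $c(z \ot x_j)=x_j \ot g_j(z)$, the analogous direct computations verify (L1) and (L2). Alternatively, one may use the bijection between left and right group-type BVSs noted just before Lemma \ref{l:ybinbasis}, combined with $S^{-1}=S$ and the isomorphism $LB_n \cong OLB_n$ by inversion, to deduce the right case from the left. The only real obstacle is careful bookkeeping with indices and the order in which $c$ and $S$ are composed; no deeper structural input is required, which is ultimately why the extension works for \emph{every} group-type BVS.
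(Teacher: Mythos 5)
Your proof is correct and follows essentially the same route as the paper: note (B1), (B2), (S1)--(S3), (L0) are automatic, reduce to the left group-type case (or equivalently handle both directly), and verify (L1) and (L3) by applying both sides to a basis vector of three adjacent tensor factors. The paper likewise invokes the $LB_n \cong OLB_n$ isomorphism via inversion to deduce the right group-type case, and your explicit index computations for (L1) and (L3) match the paper's displayed identities.
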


\begin{proof}  Define $\rho^c(s_i)=Id_V^{\ot i-1}\ot S\ot Id_V^{\ot n-i-1}$.  Relations $(B1),(B2),(S1),(S2),(S3)$ and $(L0)$ are immediate.
Since inversion gives an isomorphism from $LB_n$ to $OLB_n$ and produces a left group-type BVS from a right group-type BVS it suffices to check the relations $(L1)$ and $(L3)$ for $i=1$.
Firstly, $$\rho^c(s_1s_2\sigma_1)(x_i\ot x_j\ot x_k)=(x_k\ot g_i(x_j)\ot x_i)=\rho^c(\sigma_2 s_1 s_2)(x_i\ot x_j\ot x_k)$$ verifying $(L1)$.  Similarly,
$$\rho^c(\sigma_{2}\sigma_1s_{2})(x_i\ot x_j\ot x_k)=g_i(x_k)\ot g_i(x_j)\ot x_i=\rho^c(s_1\sigma_2\sigma_1)(x_i\ot x_j\ot x_k).$$ so we have $(L3)$.
\end{proof}

Suppose that $(V,c)$ is of left group-type, and we define $\rho^c(s_i)$ via $S$ as in the proof of Proposition \ref{p:extension}. Then $(L2)$ is satisfied if and only if the $g_i$ pairwise commute:
$$\rho^c(\sigma_1\sigma_2 s_1)(x_i\ot x_j\ot x_k)=g_jg_i(x_k)\ot j\ot i=g_ig_j(x_k)\ot x_j\ot x_i=\rho^c(s_2\sigma_1\sigma_2).$$
In particular, if $(V,c)$ is both of left and right group-type then $\rho^c$ extends to a representation of $SLB_n$.  More generally, we have:

\begin{prop}\label{p:diagonal extension}
Suppose that $(V,c)$ and $(V,S)$ are of diagonal type with respect to the (same) basis $X$ and $S^2=Id_{V^{\ot 2}}$.  Then $\rho^c$ extends to a representation of $SLB_n$ via $\rho^c(s_i)=Id_V^{\ot i-1}\ot S\ot Id_V^{\ot n-i-1}$.
\end{prop}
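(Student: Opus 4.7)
The plan is to verify every defining relation of $SLB_n$ by direct evaluation on basis tensors $x_a\ot x_b\ot x_c$. Write the diagonal scalars as $c(x_i\ot x_j)=q_{ij}\,x_j\ot x_i$ and $S(x_i\ot x_j)=p_{ij}\,x_j\ot x_i$; the hypothesis $S^2=\one_{V^{\ot 2}}$ is equivalent to $p_{ij}p_{ji}=1$ for all $i,j$.

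First I dispose of the easy relations. $(B1)$ and $(B2)$ hold because $(V,c)$ is already a BVS and the representation is local. $(S2)$ and $(L0)$ follow from locality, since the operators involved act on disjoint tensor factors. $(S3)$ is exactly the identity $p_{ij}p_{ji}=1$. For $(S1)$, one evaluates both $\rho^c(s_1s_2s_1)$ and $\rho^c(s_2s_1s_2)$ on $x_a\ot x_b\ot x_c$ and obtains $p_{ab}p_{ac}p_{bc}\,x_c\ot x_b\ot x_a$ on each side; equivalently, $(V,S)$ is itself a diagonal BVS and so automatically satisfies the braid relation.

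The three mixed relations $(L1)$, $(L2)$, $(L3)$ all share a common structure: both sides send $x_a\ot x_b\ot x_c$ to a scalar multiple of $x_c\ot x_b\ot x_a$, the accumulated scalar being a product of two $q$'s and one $p$ indexed by pairs drawn from $\{a,b,c\}$. Tracking which pair of tensor factors is swapped at each of the three steps, one finds that both sides produce exactly the same three subscripted factors, and the two products agree by the commutativity of scalar multiplication. For example, $(L3)$ works out to $q_{ab}q_{ac}p_{bc}\,x_c\ot x_b\ot x_a$ on both sides, and analogous computations dispose of $(L1)$ and $(L2)$.

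The only obstacle is careful bookkeeping of the indices; conceptually there is no hidden constraint to verify, because diagonality makes all the $g_i$ (and all the $h_j$) pairwise commute, so the matrix condition $g_ig_j=g_kg_i$ of Lemma \ref{l:ybinbasis} holds trivially. In particular, the commutativity argument used for $(L2)$ in the paragraph following Proposition \ref{p:extension} transfers unchanged to the present, more general $S$, and the same pattern of argument handles $(L1)$ and $(L3)$.
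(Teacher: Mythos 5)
Your proposal is correct and takes essentially the same approach as the paper, which states only that the mixed relations follow by ``straightforward calculations''; your write-up supplies those calculations explicitly, confirming that both sides of $(L1)$, $(L2)$, $(L3)$ send $x_a\ot x_b\ot x_c$ to the same scalar multiple of $x_c\ot x_b\ot x_a$ (namely $q_{ab}p_{ac}p_{bc}$, $p_{ab}q_{ac}q_{bc}$, and $q_{ab}q_{ac}p_{bc}$, respectively). One minor caveat: your remark that the post-Proposition~\ref{p:extension} argument for $(L2)$ ``transfers unchanged'' is slightly inexact, since the general diagonal $S$ introduces an extra scalar $p_{ab}$ on each side; but your explicit computation already verifies these match, so this is a rhetorical imprecision rather than a gap.
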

\begin{proof}  It suffices to check $(L1),(L2)$ and $(L3)$, which are straightforward calculations.
\end{proof}

Note, that in case $(V,c)$ is of group type (either right or left), $c$ takes a canonical form in terms of the basis
$X=[x_1,\dots,x_n]$ and in terms of that basis $S(x_i\otimes x_j)=\pm x_j\otimes x_i$
then $(V,c)$ is of diagonal type if $cS=Sc$. In this case the index of $\rho^c(B_n)$ in $\rho^c(LB_n)$ is finite.
\subsection{Affine group-type BVSs}

We are interested in local representations of $LB_n$ that detect symmetry, \emph{i.e.} that do not factor over $SLB_n$.
Fix $m\in\N$ and let $V$ be an $m$-dimensional vector space with basis $[x_1,\ldots,x_m]$.  For each $1\leq j\leq m$
define $h_j(x_i)=x_{\alpha i+\beta j}$ for some $\alpha,\beta\in\N$, where indices are taken modulo $m$.    We will determine sufficient conditions on $\alpha$ and $\beta$ so that $c(x_i\ot x_j):=x_j\ot h_j(x_i)$ gives $(V,c)$ the structure of a right BVS.  We will call these \textbf{affine} group-type BVSs.  For notational convenience we will identify $x_i$ with $i\pmod{m}$ and define $h_j(i)=\alpha i+\beta j$ where now $\alpha,\beta$ are integers modulo $m$, and denote $x_i\ot x_j$ by $(i,j)$.

The operator $h_j$ is invertible if and only if $\gcd(\alpha,m)=1$.  Since we are interested in finding BVSs that do not factor over $SLB_n$, we should look for non-diagonalizable affine BVSs.
By the proof of Lemma \ref{l:leftright} we see that a BVS corresponding to $\{h_j:1\leq j\leq m\}$ is diagonalizable if and only the $h_j$ pairwise commute.  Computing $h_ih_j(k)=h_jh_i(k)$ we see that this happens precisely when $(\alpha-1)\beta\equiv 0\pmod{m}$.  In particular we must assume that $\alpha\not\equiv 1\pmod{m}$ and $\beta\not\equiv 0\pmod{m}$.

By Proposition \ref{p:extension} as soon as we have determined values $\al$, $\bt$ so that $(V,c)$ is a (right) BVS we may extend $\rho^c$ to $LB_n$ by taking $S(x_i\otimes x_j)=x_j\ot x_i$.  Computing, we have:

$$\sigma_1\sigma_2\sigma_1(i,j,k)=(k,h_k(j),(h_k\circ h_i)(i))=\sigma_2\sigma_1\sigma_2(i,j,k)=(k,h_k(j),(h_{h_k(j)}\circ h_k)(i)).$$
Therefore we must have
$$(h_k\circ h_j)(i)=\al^2 i+\al\bt j+\bt k=(h_{h_k(j)}\circ h_k)(i)=\al^2 i+\al\bt(k+j)+\bt^2k,$$
that is, $\beta(\alpha+\beta)=\beta$.  One family of solutions corresponds to $\al+\bt=1$ so we set $t=\al$ and $\bt=(1-t)$.
We have proved:

\begin{theorem}
\label{p:asymmetric}
Let $m,t\in\N$ with $\gcd(m,t)=1$ and $t\not\equiv 1\pmod{m}$.  Then defining $h_j(x_i)=x_{ti+(1-t)j}$ and $S(x_i\ot x_j)=x_j\ot x_i$ (indices modulo $m$) on the basis $X:=[x_1,\ldots,x_m]$ gives rise to a loop braided vector space $(V,c,S)$ of $LB_n$ such that the corresponding $LB_n$ representation, $\varphi$, does not factor over $SLB_n$.
\end{theorem}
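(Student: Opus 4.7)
The plan is to assemble what has already been done in the paragraph preceding the theorem and then verify one last relation. The Yang--Baxter equation for $c(x_i\ot x_j)=x_j\ot x_{ti+(1-t)j}$ follows from the derivation immediately above the theorem: setting $\alpha=t,\ \beta=1-t$ satisfies $\beta(\alpha+\beta)=\beta$ identically, so $(V,c)$ is a BVS. The coprimality $\gcd(t,m)=1$ ensures every $h_j$ is a bijection of $\Z/m\Z$, hence $c\in \GL(V^{\ot 2})$ and $(V,c)$ is a right group-type BVS in the sense of Section~3. Proposition~\ref{p:extension} then upgrades this to a loop BVS $(V,c,S)$ with the prescribed swap, producing an $LB_n$-representation $\varphi$.

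The substance of the proof lies in showing that $\varphi$ does not factor over $SLB_n$; equivalently, that $(L3)$ fails in the image. My plan is to evaluate both sides of the $i=1$ instance of $(L3)$ on a generic basis element $x_i\ot x_j\ot x_k$. Using $c(x_a\ot x_b)=x_b\ot h_b(x_a)$ one computes
\[
\varphi(s_1\sigma_2\sigma_1)(x_i\ot x_j\ot x_k)=x_k\ot x_j\ot h_kh_j(x_i),
\]
\[
\varphi(\sigma_2\sigma_1 s_2)(x_i\ot x_j\ot x_k)=x_k\ot x_j\ot h_jh_k(x_i).
\]
Unwinding the subscripts gives $(h_kh_j-h_jh_k)(x_i)$ controlled by a shift of $-(t-1)^2(j-k)\pmod{m}$. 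The hypothesis $t\not\equiv 1\pmod{m}$ is then invoked to produce an explicit pair $(j,k)$ on which this shift is nonzero, witnessing that $(L3)$ is violated.

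The main obstacle is this final step of exhibiting a separating basis vector: one must verify that the conditions $\gcd(t,m)=1$ and $t\not\equiv 1\pmod m$ actually force $(t-1)^2(j-k)\not\equiv 0\pmod m$ for some $j,k$, since only nonvanishing of the shift at some triple is needed. Once that indexing issue is disposed of, the proof closes: the Yang--Baxter check is inherited from the preceding paragraph, the $LB_n$-extension is packaged by Proposition~\ref{p:extension}, and the failure of $(L3)$ is a one-line consequence of the noncommutativity $h_kh_j\neq h_jh_k$ in the family.
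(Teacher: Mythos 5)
Your approach mirrors the paper's own: verify Yang--Baxter by observing that $\alpha=t$, $\beta=1-t$ satisfies $\beta(\alpha+\beta)=\beta$, invoke Proposition~\ref{p:extension} to obtain the loop BVS from a right group-type BVS, and then detect failure of $(L3)$ by computing the commutator of $h_j$ and $h_k$. Your commutator computation is also correct: the two sides of $(L3)$ differ exactly by the shift $(t-1)^2(k-j)\pmod m$ applied to the last tensor factor, so $(L3)$ fails if and only if $(t-1)^2\not\equiv 0\pmod m$.

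The gap is in the step you flag as the ``main obstacle'' and then declare ``disposed of'': the stated hypotheses $\gcd(m,t)=1$ and $t\not\equiv 1\pmod m$ do \emph{not} force $(t-1)^2\not\equiv 0\pmod m$ when $m$ has a repeated prime factor. Take $m=4$, $t=3$: then $\gcd(4,3)=1$ and $3\not\equiv 1\pmod 4$, but $(t-1)^2=4\equiv 0\pmod 4$, so all the $h_j$ pairwise commute, $(L3)$ holds identically, and the resulting representation \emph{does} factor over $SLB_n$. (More generally $m=p^2$, $t=1+p$ gives the same failure.) This is in fact a gap in the paper's own statement, not just your write-up: the discussion preceding the theorem observes that $\alpha\not\equiv 1$ and $\beta\not\equiv 0\pmod m$ are necessary for non-diagonalizability, but never checks sufficiency, and for $m$ with a square factor they are insufficient. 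To close the proof you need the genuinely stronger hypothesis $(t-1)^2\not\equiv 0\pmod m$; this is automatic precisely when $m$ is squarefree (in particular when $m$ is prime, the case the paper emphasizes), since then $m\mid (t-1)^2$ would force $m\mid (t-1)$, contradicting $t\not\equiv 1\pmod m$.
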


\begin{remark}
For $m$ prime, the family of loop braided vector spaces in Prop. \ref{p:asymmetric} are all possible non-diagonalizable affine BVSs, but for $m$ composite there are other solutions.  We will only focus on these solutions in the present work.
\end{remark}

It is clear from the construction that the representations $\varphi$ act by permutation on the standard basis vectors of $V^{\ot n}$.
By passing to the action on indices, we may identify the $\C$-representation $\varphi$ in Prop. \ref{p:asymmetric} with the following homomorphism $\rho_{m,t}:LB_n\rightarrow\GL_n(\Z_m)$ via

$$\rho_{m,t}(\sigma_i)=\begin{pmatrix}I_{i-1}&0&0\\0 & M &0\\0&0& I_{n-i-1}\end{pmatrix},\rho_{m,t}(s_i)= \begin{pmatrix}I_{i-1}&0&0\\0 & P &0\\0&0& I_{n-i-1}\end{pmatrix}$$
where $M=\begin{pmatrix} 0 & 1\\ t & 1-t\end{pmatrix}$ and $P=\begin{pmatrix}
                                                               0 &1\\1&0
                                                              \end{pmatrix}$
 with entries in $\Z_m$.  For later use, we point out that evaluating $\rho_{m,t}(\sigma_i)$ at $t=1$ gives $\rho_{m,t}(s_i)$.

We now investigate the images of these representations.

The restriction of $\rho_{m,t}$ to $\B_n$ may look familiar: it is nothing more than the (inverse of) the (unreduced) Burau representation, specialized at an integer $t$ with entries modulo $m$.  In light of \cite{Ver} it is not surprising that the Burau representation should admit an extension to $LB_n$ (although we caution the reader that \cite{Ver} may have a different composition convention than ours).  Observe that the row-sums of $\rho_{m,t}(\sigma_i)$ and $\rho_{m,t}(s_i)$ are $1$; therefore they are $n\times n$ (row)-stochastic matrices (modulo $m$).  In particular since the affine linear group $\AGL_{n-1}(\Z_m)$ is isomorphic to the group of $n\times n$ stochastic matrices modulo $m$ (see \cite{Po}, where $m$ prime is considered, but the proof is valid for any $m$), we see that the image of $\rho_{m,t}$ is a subgroup of $\AGL_{n-1}(\Z_m)$.  The question we wish to address is: When is $\rho_{m,t}:\,LB_n\rightarrow \AGL_{n-1}(\Z_m)$ surjective?

The group $\AGL_{n-1}(\Z_m)$ is the semidirect product of $(\Z_m)^{n-1}$ with $\GL_{n-1}(\Z_m)$ (with the obvious action).  The standard way to view $\AGL_{n-1}(\Z_m)$ is as the subgroup of $\GL_n(\Z_m)$ consisting of matrices of the form $\begin{pmatrix} A & v\\ 0 & 1\end{pmatrix}$ where $A\in\GL_{n-1}(\Z_m)$ and $v\in \Z_m^{n-1}$ (a column vector).  For economy of notation, we will denote these elements by $g(A,v)$.  In this notation the multiplication rule is: $$(A_1,v_1)(A_2,v_2)=(A_1A_2,A_1v_2+v_1).$$

To determine the conditions on $m,t$ so that $\rho_{m,t}$ is surjective, we need some additional notation and technical results.
\begin{itemize}
 \item For $i\neq j$, define $\Delta_{i,j}\in Mat(n)$ to be the matrix with $(i,j)$-entry equal to $1$ and all other entries zero.
 \item For $i\neq j$, define $E_{i,j}(\alpha)=I+\alpha\Delta_{i,j}$, \emph{i.e.} the elementary matrix corresponding to the row operation which adds $\alpha$ times the $j$th row to the $i$th row.
 \item Let $D(\alpha,i):=I+(\alpha-1)\Delta_{i,i}$ be the diagonal matrix with the $(i,i)$-entry equal to $\alpha$ and the remaining (diagonal) entries equal to $1$.
\end{itemize}

\begin{lemma}   Let $B=g(I,e_{i})\in\AGL_{n-1}(\Z_m)$, with $e_i\in(\Z_m)^{n-1}$ a standard basis vector. Then $\AGL_{n-1}(\Z_m)\subset \GL_n(\Z_m)$ is generated by $B$ and the following matrices:
\begin{enumerate}\label{l:generators}
\item[(a)] $E_{i,j}(1)$, all $1\leq i\neq j\leq n-1$ and
\item[(b)] $D(\alpha,1)$ all $\alpha\in\Z_m^\times$.
\end{enumerate}
\end{lemma}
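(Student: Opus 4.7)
The plan is to exploit the semidirect product structure $\AGL_{n-1}(\Z_m)=\Z_m^{n-1}\rtimes\GL_{n-1}(\Z_m)$: every element factors as $g(A,v)=g(I,v)\cdot g(A,0)$, so it suffices to prove that the subgroup $H$ generated by $B$, the $E_{i,j}(1)$ and the $D(\alpha,1)$ contains both the \emph{linear part} $\{g(A,0):A\in\GL_{n-1}(\Z_m)\}$ and the \emph{translation part} $\{g(I,v):v\in\Z_m^{n-1}\}$.

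For the linear part, integer powers $E_{i,j}(1)^\alpha = E_{i,j}(\alpha)$ yield every elementary transvection with $\alpha\in\Z_m$. A standard Gauss--Jordan argument (using that $\Z_m$ is a semilocal ring, so any unimodular row can be cleared to $e_1$ by transvections) shows that these matrices generate $\mathrm{SL}_{n-1}(\Z_m)$. In particular $H$ contains the signed permutation matrices $E_{i,j}(1)E_{j,i}(-1)E_{i,j}(1)$, and conjugating $D(\alpha,1)$ by an appropriate such permutation yields $D(\alpha,k)$ for every $1\le k\le n-1$. Since $\det D(\alpha,1)=\alpha$ sweeps out all of $\Z_m^\times$, combining these with $\mathrm{SL}_{n-1}(\Z_m)$ gives the full linear part of $\AGL_{n-1}(\Z_m)$.

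For the translation part, the multiplication rule gives at once the conjugation identity
$$g(A,0)\,B\,g(A,0)^{-1}=g(I,\,Ae_i).$$
Since $H$ now contains every $g(A,0)$ with $A\in\GL_{n-1}(\Z_m)$, and for each $j$ there exists $A\in\GL_{n-1}(\Z_m)$ with $Ae_i=e_j$ (e.g.\ take $A$ to be a signed permutation), we conclude that $g(I,e_j)\in H$ for all $j$. Products and inverses of the $g(I,e_j)$ then produce every $g(I,v)$ with $v\in\Z_m^{n-1}$, which finishes the argument.

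The principal technical ingredient, hidden in the first step, is the classical fact that $\mathrm{SL}_{n-1}(\Z_m)$ is generated by its transvections: immediate for $m$ prime from the field case, and in general obtainable either from the surjectivity of $\mathrm{SL}_{n-1}(\Z)\twoheadrightarrow\mathrm{SL}_{n-1}(\Z_m)$ combined with the result over $\Z$, or directly by a Gauss--Jordan reduction over the semilocal ring $\Z_m$. Beyond this ingredient the proof is routine semidirect-product bookkeeping.
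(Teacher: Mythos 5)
Your proof is correct and takes essentially the same approach as the paper: both use the conjugation identity $g(A,0)\,g(I,e_i)\,g(A,0)^{-1}=g(I,Ae_i)$ to generate the translation subgroup once the linear part is available, and both obtain the linear part by a Gauss--Jordan argument over $\Z_m$. The only organizational difference is that you pass through $\mathrm{SL}_{n-1}(\Z_m)$ (cited as the classical fact that transvections generate it) and then adjoin the $D(\alpha,1)$ via a determinant argument, whereas the paper reduces an arbitrary $A\in\GL_{n-1}(\Z_m)$ directly by elementary row and column operations, noting that the gcd of any row or column of $A$ is a unit; this is a cosmetic rather than substantive distinction.
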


\begin{proof}   Let $e_j\in(\Z_m)^{n-1}$ be an arbitrary standard basis vector and choose $A$ so that $Ae_i=e_j$.  Then $$g(A,0)g(I,e_i)g(A^{-1},0)=g(I,e_j).$$  Since the matrices $g(I,e_j)$ generate all elements of the form $g(I,b)$, $b\in(\Z_m)^{n-1}$, it is enough to show that matrices in (a) and (b) generate all matrices of the form $g(A,0)$ with $A\in\GL_{n-1}(\Z_m)$.

Since $[E_{i,j}(1)]^k=I+k\Delta_{i,j}=E_{i,j}(k)$ we see that we can obtain all elementary matrices corresponding to replacing row/column $i$ with a multiple of row/column $j$ plus row $i$.  Moreover, we may obtain all matrices that permute rows and all matrices of the form $D(\alpha,j)$ inductively from $D(-1,1)$ via:
$$\begin{pmatrix} 1&1\\0&1\end{pmatrix}\begin{pmatrix} -1&0\\0&1\end{pmatrix}\begin{pmatrix} 1&-1\\0&1\end{pmatrix}\begin{pmatrix} 1&1\\0&1\end{pmatrix}=\begin{pmatrix} 0&1\\1&0\end{pmatrix}.$$ Thus we obtain all elementary matrices in $\GL_{n-1}(\Z_m)$ as products of matrices as in (a) and (b).

Finally, observe that the $\gcd$ of the entries in any row/column of $A\in\GL_{n-1}(\Z_m)$ must be a unit in $\Z_m$.  Using elementary row/column operations (left/right multiplication by elementary matrices) we may transform $A$ into a matrix with the $(1,1)$ entry equal to $1$ and the remaining entries equal to zero.  It then follows by induction that every $A\in\GL_{n-1}(\Z_m)$ is a product of matrices as in (a) and (b), as required.
\end{proof}

\begin{prop}\label{p:affine image} Suppose that $t\in\Z$ is chosen so that $t$ and $(1-t)$ are units in $\Z_m$ and $\Z_m^\times=\langle t,-1\rangle$.  Then $\rho_{m,t}(LB_n)\cong \AGL_{n-1}(\Z_m)$.
\end{prop}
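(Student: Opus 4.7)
The plan is to apply Lemma~\ref{l:generators}: since $\AGL_{n-1}(\Z_m)$ is generated by a single translation $B=g(I,e_1)$, the elementary matrices $E_{i,j}(1)$ for $i\ne j$, and the diagonal scalings $D(\alpha,1)$ for $\alpha\in\Z_m^\times$, it suffices to realise each of these inside $\rho_{m,t}(LB_n)$. I would identify $\AGL_{n-1}(\Z_m)$ with the group of $n\times n$ row-stochastic matrices over $\Z_m$ via the right action on the affine subspace $\{v\in\Z_m^n:\sum_i v_i=1\}$, so that $\rho_{m,t}(\sigma_i)$ and $\rho_{m,t}(s_i)$ become concrete stochastic matrices whose behaviour can be computed block-by-block.

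I would proceed by induction on $n$. The base case $n=2$ is a direct calculation in $\AGL_1(\Z_m)=\Z_m\rtimes\Z_m^\times$: the images $\rho(\sigma_1)\leftrightarrow g(-t,t)$ and $\rho(s_1)\leftrightarrow g(-1,1)$ project to $-t$ and $-1$ in $\Z_m^\times$, which generate $\Z_m^\times$ by hypothesis; and their commutator is the pure translation $g(1,t-1)$, whose powers generate the translation subgroup $\Z_m$ because $t-1\in\Z_m^\times$.

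For the inductive step with $n\ge 3$, embed $LB_{n-1}\hookrightarrow LB_n$ via the generators indexed by $1,\ldots,n-2$. By induction its $\rho_{m,t}$-image is an $\AGL_{n-2}(\Z_m)$ sitting inside $\AGL_{n-1}(\Z_m)$ as the subgroup fixing the last coordinate. It then suffices, working modulo this $\AGL_{n-2}$, to produce a translation with a non-zero last-coordinate component and every diagonal scaling $D(\alpha,n-1)$ with $\alpha\in\Z_m^\times$. For the translation, compute a suitable commutator of $\rho(\sigma_{n-1})$ with an element of $\rho(LB_{n-1})$; the resulting affine element has non-zero last-coordinate translation, and its linear part can be cancelled by left-multiplication by a member of the inductively-available $\AGL_{n-2}$. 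For the scalings, the block computation of $\rho((s_{n-1}\sigma_{n-1})^k)$ produces the pair $\bigl(D(t^k,n-1),\,E_{n-1,\,\ast}(1-t^k)\bigr)$; the elementary-matrix factor is then killed using other elements of the image, exploiting that $(1-t)\in\Z_m^\times$ so the subgroup of $\Z_m$ additively generated by $\{1-t^k:k\in\Z\}$ is all of $\Z_m$. The ``sign'' needed to pass from $\langle t\rangle$ to $\langle t,-1\rangle=\Z_m^\times$ comes from the linear part of a transposition, which has determinant $-1$ in $\AGL_{n-1}$, and yields $D(-1,n-1)$ after dividing out known elementary matrices.

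The main obstacle is the gap between $n=2$, where the commutator $[\rho(\sigma_1),\rho(s_1)]$ is miraculously a pure translation, and $n\ge 3$, where the analogous commutator is a mixed affine element with a non-trivial linear part. The resolution is to use the inductively-available $\AGL_{n-2}$ to cancel the unwanted linear part, which requires careful bookkeeping of how the stochastic-to-$\AGL_{n-1}$ isomorphism intertwines translations, elementary matrices and diagonal scalings. The hypothesis $(1-t)\in\Z_m^\times$ is used both to invert Burau-type blocks and to secure the additive closure; $\Z_m^\times=\langle t,-1\rangle$ enters only at the final step to obtain every $D(\alpha,1)$.
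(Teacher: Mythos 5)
Your plan matches the paper's proof at the strategic level: identify the image with stochastic matrices, pass to $\AGL_{n-1}(\Z_m)$ via transposition and a change of basis, run an induction on $n$, and discharge the inductive step through Lemma~\ref{l:generators}. Your base case argument is actually cleaner than the paper's: instead of explicitly conjugating $g(-1,1)$ by translations to isolate $g(-1,0)$ (which the paper does using the invertibility of $2$), you observe that the translation subgroup lies in the image and the map to the quotient $\Z_m^\times$ is surjective, so the image is all of $\AGL_1(\Z_m)$ by the usual extension argument. Your commutator $[g(-t,t),g(-1,1)]=g(1,t-1)$ checks out and parallels the paper's $g(1,1-t)$.

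The gap is in the inductive step, which is sketched but not carried through, and this is precisely where the paper does its real work. You assert that "a suitable commutator" of $\rho(\sigma_{n-1})$ with inductively available elements yields something whose linear part can be cancelled, and that $\rho((s_{n-1}\sigma_{n-1})^k)$ gives diagonal scalings modulo an elementary factor that can then be "killed." Neither assertion is verified, and the second is where one must be careful: after the change of basis, $\rho(\sigma_{n-1})$ is the affine element $\Sigma_{n-1}(t)=g(J,te_{n-1})$ whose linear part $J$ has a nontrivial off-diagonal entry, so its powers and commutators do not decouple into a clean $D$-times-$E$ product as suggested. The paper addresses this with three explicit constructions: (i) $T(t):=\Sigma_{n-1}(t)\Sigma_{n-1}(1)\Sigma_{n-1}(t)^{-1}\Sigma_{n-1}(1)=I+(1-t)(\Delta_{n-1,n-2}-\Delta_{n-1,n})$, whose $k$-th power at $k=(1-t)^{-1}$ gives $T(0)$; (ii) the product $g(D(-1,n-2),0)T(0)g(D(-1,n-2),0)T(0)=g(I,-2e_{n-1})$ giving the needed translation; and (iii) conjugates $Z=X^{-1}\Sigma_{n-1}(1)X\Sigma_{n-1}(1)$ with $X$ chosen in the inductive subgroup to produce the missing $g(E_{n-1,j}(1),0)$. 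These are not routine; you would need to supply them (or equivalent ones) before the proof is complete. The bookkeeping you flag as "the main obstacle" is real, and naming the obstacle is not the same as resolving it.
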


\begin{proof}  We proceed by induction on $n$.  For the case $n=2$ we must show that $M$ and $P$ as above generate $\AGL_{1}(\Z_m)$.  By taking the transpose of $M$ and $P$ followed by a change of basis we can transform these into our standard $\AGL_{1}(\Z_m)$ form as:
$\sigma=g(-t,t)=\begin{pmatrix} -t& t\\0&1 \end{pmatrix},\;s=g(-1,1)=\begin{pmatrix} -1 & 1\\0&1\end{pmatrix}.$

Now $g(-t,t)g(-1,1)=g(t,0)$, and $g(-1,1)g(t,0)g(-1,1)g(1/t,0)=g(1,1-t)$.  Since $(1-t)$ is invertible and $g(1,a)g(1,b)=g(1,a+b)$, we obtain all $g(1,a)$.  Since one of $t$ or $1-t$ is even, $2$ is a unit in $\Z_m$, with multiplicative inverse, say $i_2$. Now we compute $g(1,-i_2)g(-1,1)g(1,i_2)=g(-1,0)$.  Since $\Z_m^\times=\langle t,-1\rangle$ we obtain all $g(x,0)$ where $x\in\Z_m^\times$.  Therefore we have all $g(1,a)g(x,0)=g(x,a)\in\AGL_{1}(\Z_m)$.

Now we again take the transpose of $\rho_{m,t}(\sigma_i)$ and $\rho_{m,t}(s_i)$ for $1\leq i\leq n-1$ and then change to the ordered basis:
$[(1,\ldots,1),(0,1,\ldots,1),\ldots,(0,\ldots,0,1)]$, so that the generators have the form $g(A,a)$ with $A\in\GL_{n-1}(\Z_m)$ and $a\in(\Z_m)^{n-1}$.  By the induction hypothesis, the images of $\sigma_i,s_i$ for $1\leq i\leq n-2$ generate all matrices of the form $g(B,0)$ where $B\in\AGL_{n-2}(\Z_m)$.  That is, we have all $g(g(C,c),0)$ with $C\in\GL_{n-2}(\Z_m)$ and $c\in(\Z_m)^{n-2}$.  With respect to this basis the image of the generator $\sigma_{n-1}$ has the form $\Sigma_{n-1}(t):=g(J,te_{n-1})$ where $J=\begin{pmatrix} 1&0&\cdots&0\\ \vdots & \ddots & \cdots &\vdots\\ 0& \cdots & 1 &0\\ 0& \cdots & 1&-t\end{pmatrix}$, and the image of the generator $s_i$ is obtained by evaluating $\Sigma_{n-1}(t)$ at $t=1$.

We have now reduced to showing that $g(g(C,c),0)$ together with $\Sigma_{n-1}(t)$ and $\Sigma_{n-1}(1)$ generate all of $\AGL_{n-1}(\Z_m)$.  By Lemma \ref{l:generators} it suffices to obtain $g(I,e_{n-1})$ as well as all $g(E_{i,j}(1),0)$ for $1\leq i,j\leq n-1$ and $g(D(\alpha,1),0)$ for all $\alpha\in\Z_m^\times$.  Since $C\in\GL_{n-2}(\Z_m)$ and $c\in(\Z_m)^{n-2}$ can be chosen arbitrarily, we immediately obtain all $g(D(\alpha,1),0)$ as well as the $g(E_{i,j}(1),0)$ for $i\leq n-2$ and $1\leq j\leq n-1$.

Let $e_{n-1}$ denote the standard basis vector in $(\Z_m)^{n-1}$ and set $$T(t):=\Sigma_{n-1}(t)\Sigma_{n-1}(1)\Sigma_{n-1}(t)^{-1}\Sigma_{n-1}(1)=I+(1-t)(\Delta_{n-1,n-2}-\Delta_{n-1,n}).$$  We compute
$T(t)^k=I+k(1-t)(\Delta_{n-1,n-2}-\Delta_{n-1,n})$ and since $(1-t)$ is invertible modulo $m$ we may choose $k=(1-t)^{-1}$ to obtain $T(0)=I+(\Delta_{n-1,n-2}-\Delta_{n-1,n})$.  Now we compute: $$g(D(-1,n-2),0)T(0)g(D(-1,n-2),0)T(0)=g(I,-2e_{n-1}).$$
Since $-2$ is invertible modulo $m$, we may appeal to Lemma \ref{l:generators} to produce all elements of the form $g(I,b)$, once we obtain the remaining generators of $\GL_{n-1}(\Z_m)$.

 Thus it remains to produce $g(E_{n-1,j}(1),0)$ for all $1\leq j\leq n-2$.  For this we set $X=g((I+\sum_{j=1}^{n-2} a_i\Delta_{n-3,i})D(a_{n-2},n-2),0)$, that is, the $n\times n$ matrix with the $(n-2)$th row equal to $(a_1,\ldots,a_{n-2},0,0)$ and $X_{i,j}=\delta_{i,j}$ for $i\neq (n-2)$.  Notice that $X$ is of the form $g(g(C,0),0)$ with $C\in\GL_{n-2}(\Z_m)$, assuming that $a_{n-2}$ is invertible.  Setting $Z=X^{-1}\Sigma_{n-1}(1)X\Sigma_{n-1}(1)$ we find that the $(n-1)$th row of $Z$ has entries $(a_1,\ldots,a_{n-3},a_{n-2}-1,1,0)$ and $Z_{i,j}=\delta_{i,j}$ for $i\neq (n-1)$.  Specializing at appropriate values of $a_i$ (e.g. $a_{n-2}\in\{1,2\}$, $a_i\in\{0,1\}$ for $i<n-2$) we obtain all $g(E_{n-1,j}(1),0)$ for $1\leq j\leq n-2$.  Thus, by Lemma \ref{l:generators} we have completed the induction and the result follows.

\end{proof}

\begin{remark}
We conjecture that Prop. \ref{p:affine image} is sharp.

Clearly $\{\det(T):T\in\AGL_{n-1}(\Z_m)\}=\Z_m^\times$.
Since $\det(M)=-t$ and $\det(S)=-1$, the image of $\rho_{m,t}(LB_n)$ consists of matrices with determinant $\pm t^k$.  This shows if $\rho_{t,m}(LB_n)\cong\AGL_{n-1}(\Z_m)$ then $\langle t,-1\rangle = \Z_m^\times$.  In particular if $\Z_m^\times$ is not a cyclic group or the direct product of $\Z_2$ with a cyclic group $\Z_d$ then $\rho_{t,m}(LB_n)$ is a proper subgroup of $\AGL_{n-1}(\Z_m)$.  Clearly $t$ and $(1-t)$ can both be units only if $m$ is odd.  In this case, the group $\Z_m^\times\cong \Z_d\times\Z_2$ if only if $m=p^aq^b$ is a product of at most $2$ odd primes and $\gcd(p^a-p^{a-1},q^b-q^{b-1})=2$.
\end{remark}

\subsection{Relationship with Drinfeld doubles}
In \cite{GR} it is observed that a BVS $(V,c)$ with corresponding
operators $g_1,\ldots,g_n$ may be realized as a Yetter-Drinfeld module
over the group $G=\langle g_1,\ldots,g_n\rangle$.  When $G$ is finite,
these can be identified with objects in $\Rep(DG)$
(regarded as a braided fusion category)
where $DG$ is the Drinfeld double of the group $G$.

As a vector space $DG=G^\C\ot \C[G]$ where $G^\C$ is the Hopf algebra of functions on $G$ with basis $\delta_g(h)=\delta_{g,h}$ and $\C[G]$ is the (Hopf) group algebra.  The Hopf algebra structure on $DG$ is well-known.  For an account of the associated braid group representations (and further details) see \cite{ERW}.

The irreducible representations of $DG$ are labeled by pairs $(\overline{g},\chi)$ where $\overline{g}$ is a conjugacy class in $G$ and $\chi$ is the character of an irreducible representation of the centralizer of $g$ in $G$: $C_G(g)$.  The representation $\rho_{m,t}$ of Prop. \ref{p:asymmetric} can be obtained in this way.  We now describe this explicitly.

Let $m,t$ be positive integers with $\gcd(m,t)=1$ and $t\neq 1\pmod{m}$.  Let $\ell$ be the order of $t$ modulo $m$, and $\Z_m=\langle r\rangle$ be the cyclic group modulo $m$ with generator $r$.  The map $\tau(r)= r^t$ defines an automorphism of $\Z_m$, which generates a cyclic subgroup $\Z_\ell$ of $\Aut(\Z_m)$.  Therefore we may form the semidirect product $G=\Z_m\rtimes \Z_{\ell}$ via

$$
    srs^{-1} = r^t
$$
where $\langle s\rangle=\Z_\ell$.  Let us further assume that $\gcd(m,t-1)=1$.
It follows from the relations above that
$r^isr^{-i} =r^{i(1-t)}s$ for all $i$, and the conjugacy class of $s$
is $\{r^{i(1-t)}s: 0\leq i\leq m-1\}$. For notational convenience, let $q=r^{1-t}$ so that
$q$ has order $m$ and the conjugacy class of $s$ is $\{q^is: 0\leq i\leq m-1\}$.
Then $V=V_{(s,1)}$ has basis $\{q^i\mid 0\leq i\leq m-1\}$, a set of
coset representatives of $G/C_G(s)$. The action of the $R$-matrix of $DG$ $\check{R}$
on $V\ot V$ is (where $P$ denotes the usual transposition):
\begin{eqnarray*}
\check{R}(q^i\ot q^j) &=& P R (q^i\ot q^j)\\
    &=& P (\sum_{g\in G}\delta_g\ot g)(q^i\ot q^j)\\
   &=& P (q^i\ot q^{i(1-t)} s q^j)\\
   &=& P (q^i\ot q^{i(1-t)+jt})\\
   &=& q^{i(1-t)+jt}\ot q^i.
\end{eqnarray*}

Clearly we may identify $\check{R}$ with the $\Z$-linear operator on
$\Z_{m}\times \Z_{m}$ given by
$$
   (i,j)\mapsto ((1-t)i+tj,i).
$$
This is the transpose of the braided vector space described in Prop. \ref{p:asymmetric}.


\newcommand{\mpr}[1]{\begin{prop} #1 \end{prop}}
\newcommand{\mlem}[1]{\begin{lemma} #1 \end{lemma}}

\newcommand{\ignore}[1]{}

\newcommand{\braidrel}[2]{#1_{#2} #1_{#2+1} #1_{#2}
                             = #1_{#2+1}  #1_{#2} #1_{#2+1}}
\newcommand{\commrel}[3]{#1_{#2} #1_{#3} = #1_{#3} #1_{#2}
      \qquad (|#2 - #3 |>1)}
\newcommand{\kk}{{\mathbf k}}
\newcommand{\LB}{LB}  
\newcommand{\f}{{\mathbf f}}  
\newcommand{\Id}{Id}  
\newcommand{\beq}{\begin{equation}}
\newcommand{\eq}{\end{equation}}
\def\mat{ \left( \begin{array} }
\def\tam{ \end{array}  \right) }

\newcommand{\rhoo}{\tau}  
\newcommand{\rhoox}{\tau^{x}}  
\newcommand{\rhooq}{\tau^{}}  
\newcommand{\BB}{B} 
\newcommand{\Bt}{B^{\rhoo_N}} 
\newcommand{\Btt}{B^{3}} 


\section{Finite dimensional quotient algebras}

In order to study certain local and finite-dimensional representations $\rho$ of $LB_n$,
such as the BVS representations $\rho^c$ described in \S 3  above, we are interested in certain finite-dimensional
quotient algebras of the group algebra $\C[LB_n]$,
namely the algebras
\newcommand{\Lrho}{L^{\rho}}
$$
\Lrho_n \; := \; \C[LB_n] /\ker \rho  .
$$
In passing to the group algebra, we linearize.
Thus
$
\ker\rho = \{ x \in \C[LB_n] \; | \; \rho(x) = 0 \} .
$
This should be contrasted with the group representation version:
$
\ker_G\rho = \{ g \in LB_n \; | \; \rho(g) = 1 \} .
$
It can easily happen that $\ker_G\rho = \{ 1 \}$ but
$\ker\rho \neq \{ 0 \}$.
\footnote{Note also that while group representations
and group algebra representations are closed under
  tensor products, the linear kernel is not preserved in general.
}
This raises some questions. (1): What is a good presentation of $\Lrho_n$ for each $n$?
Can the kernel be described in closed form for all $n$?
(2): What are the irreducible representations of  $\Lrho_n$?

In this section, we first use an analogy to show why the answers
to these questions will be useful.
This analogy shows that the study of the
quotient algebras $\Lrho_n$
is of {\em intrinsic} interest.
Then we analyse these representations, and answer (2) in
certain cases.

\subsection{A braid group quotient analogy}\label{ss:-1}
\newcommand{\qq}{{\underline{q}}} 

Consider the ordinary braid group $\B_n$.
For each $N$, $V = \C^N$ and $q \in \C^*$ there is a
well-known BVS with $c=c_N$, where
in the case $N=2$:
\ignore{{
\[
c_2 = \;
-q^{-1} \mat{cccc} 1 \\ & 1-q^2 & -q  \\ & -q & 0 \\ &&& 1 \tam
= \;  \mat{cccc} -q^{-1} \\ & q-q^{-1} & 1  \\ & 1 & 0 \\ &&& -q^{-1} \tam
\]
or }}
\[
c_2 = \;
q^{} \mat{cccc} 1 \\ & 1-q^{-2} & -q^{-1}  \\ & -q^{-1} & 0 \\ &&& 1 \tam
= \;  \mat{cccc} q^{} \\ & (q-q^{-1} )& -1  \\ & -1 & 0 \\ &&& q^{} \tam
\]
We write $\rho_N$ for the representation $\rho^{c_N}$.

For $\qq = (q_1, q_2, ...)$ a tuple in $\C^*$ define
$\chi_\qq = \prod_i (\sigma_1 - q_i) \in \C[\B_n]$.
The Hecke algebra is
$
H_n(q) = \C[B_n] / I_q,
$
where $I_q$ is the ideal generated by
$\chi_{(q,-q^{-1})} = (\sigma_1 -q)(\sigma_1 +q^{-1}) \in \C[B_n]$
for some $q \in \C^*$.
Evidently $\rho_N$ factors through $H_n$, but it is not linearly
faithful for all $n$.
The quotients
$
H_n^N(q)
$
are defined by
$
H^N_n = \C[\B_n] / \ker\rho_N.
$

So what is a good presentation for $H^N_n$ for given $N$?
There is
an element $\f$ of $H_m$ for $m=N+1$ such that
\beq
\label{eq:HfH}
\ker \rho_N \; = \; H_n \f H_n
\eq
for all $n$ (with the kernel understood to be 0 for $n<m$).
To construct $\f$ for a given $N$,
recall that for each $m$
there is a nonzero element $\f_m$ of $H_m$ unique up to
scalars such that
$\sigma_i \f_m = \f_m \sigma_i = (-q^{-1}) \f_m$ for all $i$.
For example we can take
$\f_2 \; = U_1 $ where
$
U_i \; := \sigma_i -q
$
and $\f_3 = U_1 U_2 U_1 - U_1$.
We may take $\f = \f_{N+1}$ or any nonzero scalar multiple thereof.
That is, there is a single additional relation that characterises
$H^N_n$ as a quotient of $H_n$ for all $n$ and $q$, namely
$
\f_{N+1} = 0
$
\cite{Martin92}. Thus $H^2_n$ is the Temperley--Lieb algebra and so on.

\subsection{On localization}\label{ss:loc}
Given an algebra $A$, let $\Lambda(A)$ be the set of irreducible
representations up to isomorphism.
Another feature of the braid/Hecke/Temperley--Lieb paradigm is {\em localization}.

\newcommand{\modd}{{\mbox{mod}}}
\newcommand{\Fe}{F_e}
\newcommand{\Ge}{G_e}
\newcommand{\ad}[2]{\raisebox{.1cm}{$\longleftarrow$}
   \hspace{-.7cm} \raisebox{-.1cm}{$\longrightarrow$}
   \hspace{-.5cm} \raisebox{.321cm}{$#1$}
   \hspace{-.518cm} \raisebox{-.4321cm}{$#2$}
}    
Given an algebra $A$ and idempotent $e \in A$, then $eAe$ is also an
algebra (not a subalgebra) and the functors $\Ge,\Fe$: 
\beq \label{eq:FG1}
A-\!\!\modd \;\;\;
    \ad{\Ge}{\Fe}
       \;\;\; eAe-\!\!\modd
\eq
(`globalization' and `localization', respectively) given on modules by
\[
\Ge N = Ae \otimes_{eAe} N
\]
\[
\Fe M = eM
\]
are an adjoint pair.
Useful corollaries to this include the following:
\\ (LI) Let $L_i, L_j$ be distinct simple $A$-modules,
with $eL_i$ and $eL_j$ nonzero.
Then  
$eL_i$, $eL_j$ are distinct simple $eAe$-modules.
\\ (LII) If $L_i$ has composition multiplicity $m_i$ in $A$-module
$M$, and $eL_i \neq 0$,
then $eL_i$ has multiplicity $m_i$ in $eAe$-module $eM$:
\beq \label{eq:cm1}
[M : L_i ] \; = \; [eM : eL_i ]
\eq
(LIII) The set $\Lambda(A)$ of irreducible representations of
$A$ (up to isomorphism) is in bijection with the disjoint union of
those of $eAe$ and those of $A/AeA$:
\beq \label{eq:LamA1}
\Lambda(A) \; \cong \; \Lambda(eAe) \sqcup \Lambda(A/AeA)
\eq

The idea here is very general. Given an algebra $A$ to study, we find an
idempotent in it, then study $A$ by studying $eAe$ and $A/AeA$.
{\em In} general $eAe$ and $A/AeA$ are also unknown and
this subdivision does not help much.
But for $H^N_n$ we have an $e$ such that
\beq \label{eq:eHe00}
e H^N_n e \cong H^N_{n-N}  ,
\eq
so we can consider $eAe$ to be known by an induction on $n$.
The analysis goes as follows.

For $H^N_n$,
in addition to the property (\ref{eq:HfH}),
there is also an element $\e$ of $H^N_n$ for some $n$ (in fact
$n=N$ and $\e = \f_N$)
such that the matrix $\rho_N(\e )$ is rank=$1$.
It follows that
\beq \label{eq:eHNNee}
\rho_N(\e) \; \rho ( H^N_N ) \; \rho_N(\e )
     \;\; \subseteq \;\;  \kk \rho_N( \e )
\eq
Indeed we have the following
 (`localization property'): For all $n$,
\beq 
      \label{eq:eHe}
\rho_N(\e) \, \rho_N ( H^N_n ) \, \rho_N(\e )
   \;\; = \;\;   
       \underbrace{\rho_N( \e )}_{on \; V^N} \otimes
         \underbrace{\rho_N(H^N_{n-N} )}_{on \; V^{n-N}}
\eq
(cf. (\ref{eq:eHe00})).

We have from (\ref{eq:eHe}) that
$\e H^N_n \e \cong H^N_{n-N}$ and,
since $\e=\f_N$, that
$H^N_n / H^N_n \e H^N_n \cong H^{N-1}_n$.
So by (\ref{eq:LamA1})
the irreducible representations of $H^N_n$ can be determined by
an iteration on $n$ (and $N$).

It is sometimes possible to lift this to the
loop-braid case.  How might the braid group paradigm generalize?
Of course every finite-dimensional quotient of the group algebra of
the braid group $\B_n$
has a local relation --- a polynomial relation $\chi_\qq = 0$ obeyed
by each braid generator.
Thus we can start, organisationally, by fixing such a
relation.
If this relation is quadratic then the quotient algebra is
finite dimensional for all $n$, in particular it is
the Hecke algebra.
If the local relation is cubic or higher order then this quotient
alone is not enough to make the quotient algebra finite-dimensional
for all $n$
(and also not enough to realise the localisation property,
as in \S\ref{ss:loc}).

Below we study 
group-type representations of $LB_n$ in this context.

\subsection{Some more preparations: the BMW algebra}\label{ss:BMW1}

We define the BMW algebra
over $\C$ as follows.
For $n \in \N$ and $r,q \in \C^*$ with $q^2 \neq 1$,
$\C$-algebra
 $\CC_n(r,q)$ is generated by $b_1,b_2,...,b_{n-1}$ and inverses
obeying the braid relations
\beq \label{eq:rB}
\braidrel{b}{i},
\qquad\qquad\qquad
\commrel{b}{i}{j}
\eq
and, defining
\beq \label{eq:u_i}
\hspace{1cm}
u_i = 1 - \frac{b_i - b_i^{-1}}{q-q^{-1}}
  \;\;\; = \; \frac{b_i^{-1}}{q^{-1} - q}(b_i - q)(b_i + q^{-1}) ,
\eq
obeying the additional relations
\beq \label{eq:r1}
u_i b_i = r^{-1} u_i
\eq
\beq \label{eq:r2}
u_i b_{i-1}^{\pm 1} u_i = r^{\pm 1} u_i  .
\eq

Relation (\ref{eq:r1}) is equivalent to a `cubic local relation'
\beq \label{eq:rloc}
(b_i - r^{-1}) (b_i -q) (b_i + q^{-1}) = 0 .
\eq

Relation (\ref{eq:r1}) also implies   
\[
u_i^2 = (1+\frac{r-r^{-1}}{q-q^{-1}}) u_i .
\]
Relation (\ref{eq:r2}) implies
\[
u_i u_{i \pm 1} u_i = u_i .
\]
Of course we also have from (\ref{eq:rB}):
\[
\commrel{u}{i}{j} .
\]
Indeed the $u_i$'s generate a Temperley--Lieb subalgebra of $\CC_n(r,q)$.
This subalgebra realizes a different quotient of the braid group
algebra: the images of the braid generators are
$
a_i \; = \; 1-q'(q,r) \; u_i
$,
where $q'$ is defined by
$q'+{q'}^{-1} = 1+\frac{r-r^{-1}}{q-q^{-1}}$
with a quadratic local relation, and with the two eigenvalues
depending on $q$ and $r$.

For us the interesting case of $\CC(r,q)$ is $r=q$, where the
braid generators of the TL subalgebra
obey the symmetric group relations.
In this case, then, we have images of both the braid group and the
symmetric group in $\CC_n(q,q)$, as for $LB_n$.
Indeed we have the following.

\begin{lemma}
There is a map $\psi : LB_n \rightarrow \CC_n(q,q)$ given by
$\sigma_i \mapsto b_i$, $s_i \mapsto a_i =1-u_i$.
\end{lemma}

\proof
With $r=q$ we have $u_i^2 = 2u_i$ and $q' = 1$ so $a_i = 1-u_i$
and $a_i^2 = 1$ as already noted. Relations (L1,L2) can be directly checked.

\subsection{The representations $\rhoo_N$ of $LB_n$}\label{ss:rhooN}
 { \newcommand{\xx}{x} \newcommand{\xxx}{1}
For each $N$, and $x \in \C$,
there is a well-known local representation $\rhoox_N$ of
$\C[\B_n] / I_{\xx,\xxx,-\xxx}$
(trivially rescalable, setting $x=q^2$,
to a representation $\rhooq_N$ of
$\C[\B_n] / I_{q,q^{-1},-q^{-1}}$;
and that in case $N=2$ is also a representation of $\CC_n(q,q)$).
One takes the diagonal BVS with
\ignore{{
$g_1 = \mat{cc} 1&0 \\ 0&x \tam$;
$g_2 = \mat{cc} x&0 \\ 0&1 \tam$.
}}
$$ {
\arraycolsep=2.5975pt
\def\arraystretch{.92}
g_1 = \mat{cccc} \xx&0 \\ 0&\xxx \\ 0&0&\ddots \\ 0&0&0&\xxx \tam
, \;\;
g_2 = \mat{cccc} \xxx&0 \\ 0&\xx \\ 0&0&\ddots \\ 0&0&0&\xxx \tam
, ..., \;
g_N = \mat{cccc} \xxx&0 \\ 0&\xxx \\ 0&0&\ddots \\ 0&0&0&\xx \tam .
}
$$
We abbreviate the basis element
$e_{i_1} \otimes e_{i_2} \otimes ...\otimes e_{i_n}$ of $V^n$
as $|i_1 i_2 ... i_N\rangle$, so that $e_1 \otimes e_1 \otimes e_2$ becomes
$|112\rangle$ and so on. Then
\beq \label{eq:charge1}
\sigma_j  |i_1 i_2 ... i_N\rangle = \left\{ \begin{array}{ll}
           x  |i_1 i_2 ... i_N\rangle &  i_{j} = i_{j+1} \\
           \;\;\;  |i_1 i_2 ... i_{j+1} i_j ... i_N\rangle & \mbox{ otherwise}
\end{array} \right.
\eq
Specifically for $N=2$ (with basis elements of $V^2$ ordered
$|11\rangle,|12\rangle,|21\rangle,|22\rangle$):
$$
\sigma_i \stackrel{\rhoox_2}{\mapsto}
        \;  \Id_2 \otimes \Id_2 \otimes ... \otimes
\mat{cccc} \xx&0&0&0 \\ 0&0&\xxx&0 \\ 0&\xxx&0&0 \\ 0&0&0&\xx \tam
           \otimes \Id_2 \otimes ... \otimes Id_2
$$
 }
Strictly speaking we need to rescale:
$g_1 = \mat{cc} q&0 \\ 0&1/q \tam$;
$g_2 = \mat{cc} 1/q&0 \\ 0&q \tam$.
So
$$
\sigma_i \stackrel{\rhooq_2}{\mapsto}
    \;  \Id_2 \otimes \Id_2 \otimes ... \otimes
\mat{cccc} q&0&0&0 \\ 0&0&1/q&0 \\ 0&1/q&0&0 \\ 0&0&0&q \tam
           \otimes \Id_2 \otimes ... \otimes Id_2
$$

This gives, for example,
\beq \label{eq:si2}
\frac{\sigma_i -\sigma_i^{-1}}{q-q^{-1}}
   \stackrel{\rhooq_2}{\mapsto}
  \;  \Id_2 \otimes \Id_2 \otimes ... \otimes
\mat{cccc} 1&0&0&0 \\ 0&0&-1&0 \\ 0&-1&0&0 \\ 0&0&0&1 \tam
           \otimes \Id_2 \otimes ... \otimes Id_2
\eq

Let us   
define quotient $\C$-algebra
$$
\Bt_n = \C[\B_n] /\ker \rhoo_N .
$$

{\mpr{ \label{pr:semisimple}
The algebra $\Bt_n$ is semisimple.
}}
\proof In case $x $ is real the algebra is evidently generated by hermitian
(indeed real symmetric)
matrices. In other cases one can show that the same is true for a
different generating set.
\qed


{\mpr{ \label{pr:NLBrep}
The map
$s_i \mapsto \rhoo_N( \frac{\sigma_i -\sigma_i^{-1}}{q-q^{-1}} )$
extends $\rhoo_N$ to a representation of $LB_n$.
That is to say, $\Bt_n$ is a quotient of $\C[LB_n]$.
}}

{\mpr{ \label{pr:2LBrep}
The case $\rhoo_2$ factors through $\CC_n(q,q)$. That is
$b_i \mapsto \rhoo_2( \sigma_i )$ gives a representation of
$\CC_n(q,q)$.
}}

Given any realization of $\B_n$, and $q \in \C$, we
define $u_i$ as in (\ref{eq:u_i}).
(The image $\rhoo_2( u_i)$
 obeys the BMW relation  
(\ref{eq:r2}), but $\rhoo_N(u_i)$ for $N>2$ does not.)
As noted in (\ref{pr:NLBrep}),
$s_i \mapsto  \rhoo_N( a_i = 1-u_i)$
 gives a representation of
$\Sn_n$ for each $N$.
Indeed the $\rhoo_N$ representation of $\Sn_n$  
coincides with the classical case, $q=1$, of the $\rho_N$ Hecke algebra
representation:   
\beq \label{eq:Uu}
\rho_N^{q=1}(U_i) = \rhoo_N(u_i) .
\eq
Thus from \S\ref{ss:-1}
we have  
\newcommand{\fin}{\f^1_N}   
\newcommand{\ff}{\f^1}      
$
\ff_N \; := \; \rho_N^{q=1}(\f_N ) \in \rhoo_N .
$

Given a loop BVS 
one obvious question is: Do we have an analogue of (\ref{eq:eHe})
here together with corresponding strong representation theoretic
consequences?
We are particularly interested in cases that do not factor over
$SLB_n$. But the question is hard in general and it is instructive to
start with a `toy' such as the class of loop BVSs above.

\subsection{Fixed charge submodules of $\rhoo_N$}
\newcommand{\E}{E}  
\newcommand{\Res}{\mbox{Res}\,}


One aim is to decompose the representations $\rhoo_N$ into irreducible
representations. To this end,
note that the subspaces of $\rhoo_N$
of fixed $N$-colour-charge are invariant under the $\C[LB_n]$ action.

{\mlem{
The $\Sn_N$ action permuting the standard ordered
basis $\{ e_1, e_2, ..., e_N \}$
of $V = \C^N$  commutes with the $LB_n$ action on $V^n$.
\qed
}}

We write the action of $\Sn_N$ on the right. So if $M$ is an
$LB_n$-submodule of $V^n$ then $Mw$ is an isomorphic submodule for any
$w \in \Sn_N$. This $\Sn_N$ action acts on the set of charges.
Thus we can
index charge-submodules (up to isomorphism) by
the set $\Lambda_{N,n}$ of integer partitions of
$n$ of maximum depth $N$. This is  the same as the charge
decomposition of the Hecke algebra representation $\rho_N$ (where the
submodules are called {\em Young modules}).
But the further decomposition into irreducibles is not the
same as in the Hecke case.

For an explicit example,
\ignore{{
we abbreviate the basis element
$e_{i_1} \otimes e_{i_2} \otimes ...\otimes e_{i_n}$ of $V^n$
as $i_1 i_2 ... i_N$, so that $e_1 \otimes e_1 \otimes e_2$ becomes
112 and so on.
Then }}%
the basis $B_\lambda$ for the $\lambda$ subspace
in case $\lambda=(2,1)$ is
$ 
B_{21} \; = \; \{ 112, 121, 211 \} .
$ 
We write $Y_\lambda$ for the charge $\lambda$
submodule.
Thus we have
\beq \label{eq:young1}
\rhoo_{N,n} \cong \; \bigoplus_{\lambda \in \Lambda_{N,n}} m_\lambda Y_\lambda
\eq
where $m_\lambda$ is the multiplicity.

If $\Sn_N$ or a subgroup $G$ fixes a submodule $Y_\lambda$ then
this module is itself a right $G$-module and
an idempotent
decomposition of 1 in $\C[G]$ induces a decomposition of $Y_\lambda$.

For each $\lambda$ there is a $G$ fixing $Y_\lambda$, call it
$G_\lambda$, a Young subgroup of $\Sn_N$ (possibly trivial). As usual
an idempotent decomposition of $1$ in $\C[G]$ may be characterised by
tuples of Young diagrams/integer partitions. For each such label
there is also a secondary index running over the dimension of the corresponding
irreducible representation of $G$; but idempotents with the same
primary label are isomorphic.
If $Y_\lambda$ has a non-trivial such decomposition we will write
$Y_{\lambda}^{\underline{\mu}} $ for the submodule with primary label
$\underline{\mu}$.
We call these modules $Y_{\lambda}^{\underline{\mu}} $ {\em harmonic
  modules}.
For given $\lambda$ write $\Lambda_\lambda$ for the set of primary
labels
(the index set for irreducible representations
 $\Delta_{\underline{\mu}}$ of $G_\lambda$). Thus
\beq \label{eq:harmonicX}
Y_\lambda \; = \; \bigoplus_{\underline{\mu} \in \Lambda_\lambda }
                      \dim \Delta_{\underline{\mu}} Y_{\lambda}^{\underline{\mu}}
\eq

Note that the decomposition of $Y_\lambda$ into irreducible modules
for the restriction to the `classical' subalgebra $H^N$ generated by the
$u_i$s (the symmetric group action)
is well-known.
This gives a lower bound on the size of summands of
$Y_\lambda$ as a module for the full algebra.

{\mlem{ \label{lem:same1n}
Actions of subgroup $\Sn_n$ and $\B_n$ on $Y_{(1^n)}$ are identical up to
sign.  \qed
}}

Comparing the `classical' decomposition of $Y_{(1^n)}$ above with the
idempotent decomposition with $G= \Sn_N = \Sn_n$ in this case we see that
they are the same.

To apply localization
later we will be interested, for each given $N$,
in detecting submodules $M$ of $Y_\lambda$ on
which $\e=\ff_N$ acts like 0.
We call these $e$-null, or $\ff_N$-null, submodules.
Any such submodule $M$ decomposes also as an $\Sn_n$-submodule, and so
$\ff_N$ would have to act like 0 on each of the submodules in this
decomposition. For example in case $N=2$ only the
irreducible $\Sn_n$-module
$\Delta_{(n)}$ has this property at rank-$n$.
So here there can only be such a submodule $M$ if $\Delta_{(n)}$ is also an
$LB_n$-submodule of $Y_\lambda$.
A basis element for $\Sn_n$-submodule $\Delta_{(n)}$ in $Y_\lambda$ is known.
We take
\[
b = \sum_{w \in \Sn_n}  w \; 111...222
\]
where  
$111...222$ is
the initial basis element of $Y_\lambda$ in the lex order.
Then for example
$
b \stackrel{\lambda=(2,1)}{=} 2(112 - 121 + 211)
$.
Note here that
$
q \sigma_1 (112 -121 +211) = x112 -211 +121
$,
so $\Delta_{(3)}$ is not an $LB_3$-submodule unless $x=-1$.

{\mlem{ \label{lem:enull1}
(I) In case $N=2$, $x \neq -1$,
no $Y_\lambda$ has  $e$-null proper submodule except in case
$\lambda=(1,1)$, where $Y_{(1^2)}^{(1^2)}$ is $\ff_2$-null.
\\
(II) In case $N=3$ 
the module $Y_{(n-2,1^2)}^{(1^2)}$ is
$\ff_3$-null for $n>3$.
}}
\proof (I) The example above is indicative, except in case $(1,1)$ where
there is no $x$ term.
\\ (II) A basis of $Y_{(2,1^2)}^{(1^2)}$ is $\{ 1123-1132,
1213-1312, 1231-1321, 2113-3112, 2131-3121, 2311-3211  \}$.
One readily checks the $\ff_3$ action on this. The other cases are similar.
\qed

There is an injective algebra map
$$
\Bt_{n-N} \stackrel{\sim}{\rightarrow} \ff_N \otimes \Bt_{n-N}
 \hookrightarrow \ff_N \Bt_n \ff_N  .
$$
Thus any $\Bt_n$-module gives rise to a $\Bt_{n-N}$ module by first
localizing (we will write simply $F$ for the localisation functor
$F_{\ff_N}$ here),
then restricting.
{\mlem{ \label{lem:fY}
There is an isomorphism of $\Bt_{n-N}$-modules
\[
\ff_N Y_{(\lambda_1, \lambda_2, ..., \lambda_N)} \; \cong \;
    \left\{ \begin{array}{ll}
     Y_{(\lambda_{1} -1, \lambda_2 -1, ..., \lambda_N -1)}   &
           \lambda_N > 0
\\
0 & \lambda_N = 0
\end{array} \right.
\]
}}
\proof{}

\newcommand{\otN}{12...N}

For any given $N$ we can  
write $w \in B_{\lambda}$
as
$$
w = \;
 \underbrace{w_1 w_2 ...w_N}_{w_-} \underbrace{ w_{N+1} w_{N+2} ... w_n }_{w_+}
= \; w_- w_+  .
$$
Then
\newcommand{\Ef}{\ff_N} 
\beq \label{eq:E0}
\Ef w = \Ef w_- w_+ = \left\{ \begin{array}{ll}
         0   & \mbox{  unless $w_-$ is a permutation of \otN. }
\\
     \overline{\otN} w_+ & \mbox{  $w_-$ is a perm. of \otN. }
\end{array} \right.
\eq
where $\overline{123}=123+213+132+231+312+321$ and so on.
That is, $\Ef V^{n} \cong V^{n-N}  $ and $\Ef Y_\lambda \cong Y_{\lambda-(1^N)}$
as vector spaces, and hence modules. \qed




{\mlem{ \label{lem:fYmu}
Let $\lambda \in \Lambda$ and $l=l_\lambda$ the number of distinct
row-lengths in $\lambda$, so that $\underline{\mu}$ in
$Y_{\lambda}^{\underline{\mu}}$
has $l_\lambda$ distinct components (each $\mu_i$ a partition).
Let  $\underline{\mu}'$ denote $\underline{\mu}$ with the $l$-th
component omitted.
There is an isomorphism of $\Bt_{n-N}$-modules
\[
\ff_N Y_{(\lambda_1, \lambda_2, ..., \lambda_N)}^{\underline{\mu}} \; \cong \;
    \left\{ \begin{array}{ll}
     Y_{(\lambda_{1} -1, \lambda_2 -1, ..., \lambda_N -1)}^{\underline{\mu}}   &
           \lambda_N > 1
\\
     Y_{(\lambda_{1} -1, \lambda_2 -1, ..., \lambda_N -1)}^{\underline{\mu}'}   &
           \lambda_N = 1, \; (\mu_l)_2 =0
\\
0 & \lambda_N = 1, \; (\mu_l)_2 >0
\\
0 & \lambda_N = 0
\end{array} \right.
\]
}}
\proof{The decomposition of $Y_\lambda$ by the right-action of the
  charge group,
commutes with the left-action of $\ff_N$. So, noting
  Lemma~\ref{lem:fY}, it only remains to verify the $\lambda_N =1$
  cases.
In these cases the first column of $\lambda$ is uniquely the longest,
of length $N$. Thus the colours involved in the last component of
$\underline{\mu}$ are symmetrised by $\ff_N$.
Any colour symmetry idempotent acting from the right corresponding to
$\mu_l$ with $(\mu_l)_2 >0$ involves an antisymmetriser in its
construction,
and hence annihilates $\ff_N Y_\lambda$.
\qed}


{\mlem{ \label{pr:harmnoniso}
For $x \neq -1$
the harmonic modules of $LB_n$, i.e. the modules
$\{ Y_{\lambda}^{\underline{\mu}} \; | \; \lambda \in \Lambda_n, \;
          \underline{\mu} \in \Lambda_\lambda \}$,
are pairwise non-isomorphic.
}}
\proof Work by induction on $n$.
Compare $Y=Y_{\lambda}^{\underline{\mu}}$,
$Y'= Y_{\lambda'}^{\underline{\mu}'}$, say, with
$\underline{\mu}  \neq \underline{\mu}'$.
If either $\ff_N Y$ or $\ff_N Y' \neq 0$
for some $N \geq || \lambda || := \lambda^t_1$
then $Y \not\cong Y'$ by
  Lemma~\ref{lem:fYmu} and the inductive assumption.
The remaining cases
are when one or both of $Y,Y'$ are of type-III in Lemma~\ref{lem:fYmu}.
These are routine to check.
 \qed

How can we understand this proliferation of submodules?
Analogous results to the above hold for the Hecke quotients
$H^N_n$. There it is very useful to use a geometrical principle to
organise the indexing sets for canonical classes of modules (such as
Young modules; or simple modules --- except that there it turns out
that, roughly speaking, the same index set can be used for these
different classes). One way to understand this geometry comes from the
theory of weight spaces in algebraic Lie theory.
Here we do not have any such dual picture, but we can naively bring
over the same organisational principle. This tells us to consider
$\lambda$ as a vector in $\R^N$, and then to draw the set of
$\lambda$s in $\R^{N-1}$ by projecting down the $(1,1,...,1)$
line. One merit of this is that it allows us to draw the entire $N=3$
`weight space' of Young module indices in the plane.

\subsection{Branching rules for harmonic modules}
We consider here the natural restriction from $LB_n$ to $LB_{n-1}$, and claim Fig.\ref{fig:sl3123} gives the branching rules for $N=3$.

{\mpr{ \label{pr:Ybranch}
The branching rules for Young modules corresponding to the natural restriction
from $LB_n$ to $LB_{n-1}$ are
\[
\downarrow Y_\lambda = \bigoplus_i Y_{\lambda - e_i}
\]
where the sum is over removable boxes in the Young diagram $\lambda$.
}}
\proof The $LB_{n-1}$ action ignores the last symbol in the
colour-word basis for $Y_\lambda$.
\qed

{\mpr{ \label{pr:branching3}
The directed graph in Fig.\ref{fig:sl3123}
gives the branching rules for harmonic modules for $N=3$,
 using the geometric realisation.
}}
\proof First note that well in the interior of the picture
the Young and harmonic modules coincide and we can use Prop.\ref{pr:Ybranch}.
Specifically this gives all cases in the forward cone of the point
$(4,2)$.

The remaining cases in the forward cone of $(2,1)$ may be verified by
using Propositions~({\ref{pr:Ybranch}}) and (\ref{eq:harmonicX}).

For the remaining `boundary' cases we split up into cases in
the following subsets: \\
(ii)  the $(1,0)$-ray of point $(2,0)$; \hspace{1in}
(iv) the $(1,1)$-ray of point $(2,2)$; \\
(vi) the point $(1,0)$; \hspace{1cm}
(vii) the point $(1,1)$; \hspace{1cm}
(viii) the point $0$.

We indicate the proof  
with two representative examples.
Case
(ii): In the fibre over $(2,0)$ we have
\[
\downarrow Y_{(6,4,4)}^{(2)} = \; Y_{(6,4,3)} \oplus Y_{(5,4,4)}^{(2)}
\]
by using \ref{pr:Ybranch} and commutativity of (left) restriction with
the (right) idempotent decomposition.
\\
Case (vi): In the fibre over $(1,0)$ we have
\[
\downarrow Y_{(2,1,1)}^{((1),(2))} = Y_{(2,1)} \oplus
       Y_{(1^3)}^{((3))} \oplus Y_{(1^3)}^{((2,1))}
\]
Here the basis is
\[
1123+1132, \; 1213+1312, \; 1231+1321, \;
2113+3112, \; 2131+3121, \; 2311+3211
\]
It will be clear that on restriction the 1st, 2nd and 4th give a basis
of $Y_{(2,1)}$, while the remainder inject into $Y_{(1^3)}$,
and indeed into $Y_{(1^3)} (1+s_2)$.
\qed

\begin{figure}
\[
\includegraphics[width=9.7cm]{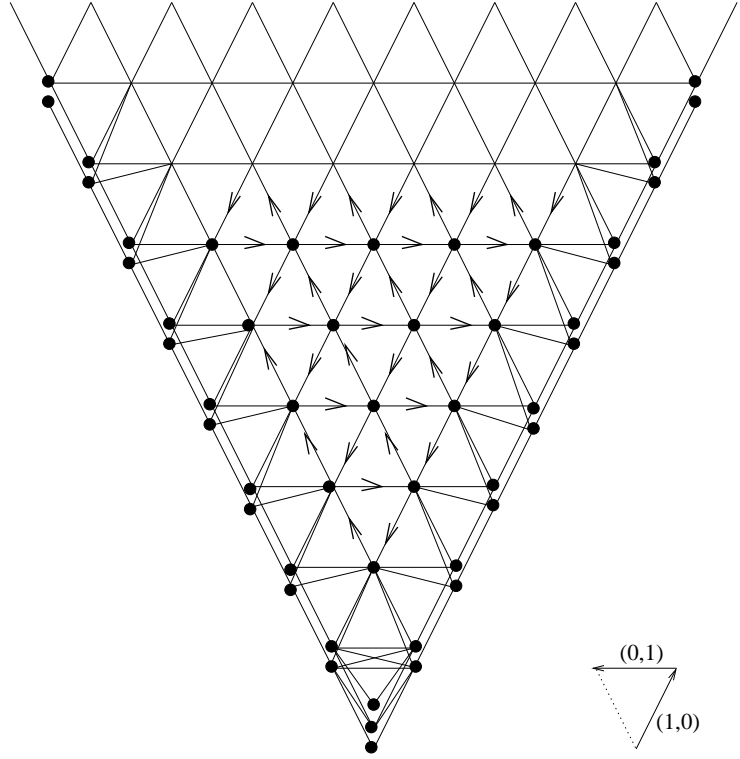}
\]
\caption{Branching rules for harmonic modules
for $N=3$.
All `parallel' edges are directed in the same direction.
\label{fig:sl3123}}
\end{figure}


\begin{theorem}
In cases $N=2,3$, $x \neq -1$, the harmonic modules are irreducible.
\end{theorem}
\proof
We work by induction on $n$.
Consider a harmonic module $Y$ at level $n$.
By Propositions \ref{pr:branching3}, \ref{pr:harmnoniso}
and the inductive assumption
restriction to $n-1$ is multiplicity-free. So
it is enough to show that there is a basis element $b$ in a good basis
with respect to this restriction
(a basis that decomposes into bases for the summands of the restriction)
such that $Y=\Bt_n b$.

In case $Y$ is also a Young module it is easy to see that
$Y=\Bt_n b$ for any standard basis element;
and that the standard basis is a good basis for the restriction to
Young modules; and that at least one of these is a summand of the
restriction to harmonic modules.

In case $Y$ is not a Young module (i.e. on the boundary)
the modification is routine and
we content ourselves here with some representative examples:
\\
(1) Recall that
$\Res Y_{(2,1,1)}^{(2)} = \; Y_{(1,1,1)}^{(3)} \oplus Y_{(1,1,1)}^{(2,1)}
                          \oplus Y_{(2,1)}^{}  .
$
An element lying in the last summand is $1213 + 1312$.
Acting with $\sigma_3$ on this we get $1231 + 1321$.
It is easy to see that this generates the whole module.
\\
(2) Recall that
$\Res Y_{(2,2,1)}^{(2)} = \; Y_{(2,1,1)}^{(2)} \oplus Y_{(2,1,1)}^{(1^2)}
                          \oplus Y_{(2,2)}^{(2)}  .
$
An element lying in the last summand is $11223 + 22113$.
Acting with $\sigma_4$ on this we get $11232 + 22131$.
It is easy to see that this generates the whole module.
\\
(3) We have
$$ \Res Y_{(4,2,2)}^{(2)} = Y_{(4,2,1)} \oplus Y_{(3,2,2)}^{(2)}  . $$
A good basis is
$\{ 11112233+11113322, \; 11112323+11113232, \; ..., \;
11123213+11132312, $
$ \; ..., \; 32211113+23311112 , \; ... \}$,
where all the explicitly written elements lie in the basis for
$Y_{(4,2,1)}$ in the restriction (the first word ends in 3).
Now apply $\sigma_7$: $\sigma_7 (11123213+11132312) = 11123231+11132321$,
which lies in $Y_{(3,2,2)}^{(2)}$.
\qed

Categorical versions of
 the structure for  $N=2$ and $N=3$   
also can be worked out explicitly.
(But in light of Proposition~\ref{pr:semisimple}
these are not as powerful a tool here as
in the corresponding Hecke cases.)
We will leave them  
for future publication.

We make the obvious conjecture for the generalisation to
higher $N$: that the harmonic modules are again a complete set of
irreducibles. 



\end{document}